\journal{ }
\numberwithin{equation}{section}
\newtheorem{theorem}{Theorem}[section]
\newtheorem{lemma}[theorem]{Lemma}
\newtheorem{definition}[theorem]{Definition}
\newtheorem{remark}[theorem]{Remark}
\begin{document}

\begin{frontmatter}



\title{Analysis of the weak formulation of a coupled nonlinear parabolic system modeling a heat exchanger} 


\author[1]{Kouma Ali Ouattara} \ead{benzeus96@gmail.com}
\author[1]{Bomisso Gossrin Jean-Marc} \ead{bogojm@yahoo.fr}
\author[1]{B\'erenger Akon Kpata}\ead{kpata\_akon@yahoo.fr} 
\author[2]{Tour\'e Kidj\'egbo Augustin}\ead{latoureci@gmail.com}

\address[1]{Universit\'e Nangui Abrogoua, UFR Sciences Fondamentales et Appliqu\'ees, 02 BP 801 Abidjan 02, C\^ote d'Ivoire}
\address[2]{Institut National Polytechnique Houphou\"et-Boigny de Yamoussoukro, BP 1093 Yamoussoukro, C\^ote d'Ivoire}

	\begin{abstract}
		This paper establishes the existence, uniqueness and time-space regularity of the weak solution to a nonlinear coupled parabolic system modeling temperature evolution in a coaxial heat exchanger with source terms and spatially varying coefficients. The system is formulated in a weak sense and the analysis relies on a Faedo-Galerkin method tailored to handle the nonlinear coupling and heterogeneous domains. Under suitable assumptions on the initial data and source terms, enhanced regularity in both time and space is obtained. In contrast with classical scalar models, the study addresses a multi-component system with realistic boundary conditions and complex interfacial dynamics.
	\end{abstract}

\begin{keyword}
Heat exchanger \sep   parabolic system \sep weak formulation \sep existence \sep uniqueness \sep regularity result

\MSC[2020] 35K05 \sep 35K55 \sep 35A15 \sep 35A01 \sep 35A02 \sep 35A09

\end{keyword}

\end{frontmatter}

\section{Introduction}

The modeling of heat transfer in heterogeneous and structured media, characterized by strong thermal and mechanical couplings, has attracted growing attention from both theoretical and practical perspectives. In particular, this challenge arises in the design of high-performance energy systems including heat exchangers (see for example \cite{Bell2020}, \cite{Kakac2012} and \cite{Pawlowski2021}), which involve multiple domains with distinct thermodynamic and material properties, where heat transfer occurs through diffusion, convection and solid-to-solid heat transfer interactions.\\
\noindent In this work, we focus on a coaxial heat exchanger consisting of several concentric regions, each characterized by distinct thermophysical properties. The thermal response of the entire system is governed by the interactions between the different regions, which are described by the following system of nonlinear coupled parabolic partial differential equations $(\Sigma_T)$:
\begin{equation*}
	\partial_t \mathbf{T}(x,t) = \partial_x \left( \mathbb{E}(x)  \partial_x \mathbf{T}(x,t)  \right) - \mathbb{F}  \partial_x \mathbf{T}(x,t)  + \mathbb{K}(x) \mathbf{T}(x,t) + \mathbb{S}(t), \quad x \in (0,1), \; t >0.
\end{equation*}
Here, $ \mathbf{T} = \left( T_{\alpha}  \right)_{\alpha \in \mathcal{I}}^{\top} $ denotes the vector of unknown temperature functions, where each index $ \alpha \in \mathcal{I} = \{f, s, g, p\} $ refers respectively to the fluid ($ f $), the separating wall ($ s $), the gas ($ g $) and the insulating wall ($ p $).
 The matrices $ \mathbb{E} $, $ \mathbb{F} $, $ \mathbb{K} $ and the vector $ \mathbb{S}(t) $ represent the diffusion, convection, coupling (reaction) and source terms, respectively, and are defined as follows:
 \begin{itemize}
 	\item $	\mathbb{E}(x)=\mathrm{diag} \left( E_{f}(x), \; E_s(x), \; E_g(x), \; E_p(x) \right),$ where $ E_{\alpha} $ denotes the diffusion coefficient for the region \( \alpha \in \mathcal{I} \) and is assumed to be continuous, differentiable, strictly positive and bounded on the interval \( [0, 1] \).
 	\item  $ \mathbb{F} = \mathrm{diag} \left( f_f, 0, -f_g, 0 \right),$ where $ f_f $ and $ f_g $ are the convection coefficients for the fluid and gas regions, respectively, and satisfy $ f_f, \; f_g > 0 $, while the other regions have no convection.
 	\item $ \mathbb{K}(x) = \begin{pmatrix}
		-K_1(x)& K_1(x)&0&0\\
		K_2(x)& -[K_2+K_3](x)&K_3(x)&0\\
		0&K_4(x)& -[K_4+K_5](x)&K_5(x)\\
		0&0&K_6(x)&-K_6(x)
	\end{pmatrix},$ where $ K_j $ for $ j = 1, \dots, 6 $ are positive continuous functions on $ (0,1) $, representing the heat exchange rates between regions.
	\item $\mathbb{S}(t) = \left( S_{\alpha}(t) \right)_{\alpha \in \mathcal{I}}^{\top},$ where $ S_{\alpha} $ denotes the source term associated with each region, assumed to be Lipschitz continuous with respect to time, continuously differentiable and nonnegative.
	
\end{itemize}
The system $(\Sigma_T)$ is subject to the following boundary and initial conditions:
 \begin{itemize}
 	\item $T_f(0,t) = f_0, \quad T_g(1,t) = g_0, \quad t > 0$,
 where $ f_0 $ and $ g_0 $ are the given positive inlet temperatures for the fluid and the gas, respectively.
 	\item $	\left(E_{\alpha_1} (.)\, \partial_x T_{\alpha_1} (.,t)\right)(0) = \left(E_{\alpha_2} (.)\, \partial_x T_{\alpha_2} (.,t)\right)(1) = 0 , \quad t > 0, \; \alpha_1 \in \{g, s, p\}, \; \alpha_2 \in \{f, s, p\}.$
 	These conditions describe the absence of heat flux at the boundaries.
 	\item $	\mathbf{T}_0 = \mathbf{T}(x,0) = \left( T_{\alpha}(x,0) \right)_{\alpha \in \mathcal{I}}^{\top} = \left( T_{\alpha}^0(x) \right)_{\alpha \in \mathcal{I}}^{\top}, \quad x \in [0,1], \; \alpha \in \mathcal{I},$ where $ T_{\alpha}^0 \geq 0 $ represents the initial temperature distribution in each region $ \alpha \in \mathcal{I}$. 	
 \end{itemize}
 
\noindent The asymptotic behavior of the coupled system $(\Sigma_T)$ has been extensively studied under simplified assumptions: constant coefficients, linearized dynamics, neglected diffusion/source terms and homogeneous boundary conditions (see \cite{Xao}, \cite{Hideki}, \cite{Rauch} and \cite{Xu}). To gain further insight into the system's behavior, Abdoua et al. in \cite{Xao} proposed a numerical study of a linearized version of $(\Sigma_T)$, but this approach has not yet been applied. In order to conduct such an analysis, it is essential to first establish a rigorous theoretical foundation that ensures the existence and well-posedness of a weak solution. However, classical methods used to prove the existence of the weak solution (the inf-sup condition \cite{Brezzi1974}, fixed-point techniques \cite{Deimling1985}, energy minimization methods \cite{Lions1969}) are generally not employed in the implementation of concrete numerical approximations.\\
\indent In this work, our aim is to establish the existence, uniqueness and regularity of the weak solution to the general system $(\Sigma_T)$ using the Faedo-Galerkin method, which is traditionally applied to scalar equations (see \cite{L.C. Evans}, \cite{Hamrouni} and \cite{Yapi}). This method is particularly suited for numerical implementation, as it constructs approximate solutions in finite-dimensional spaces and naturally leads to schemes compatible with computational approaches.
 The main difficulty lies in the treatment of the coupling terms, which hinder the direct application of standard techniques developed for scalar or decoupled systems. In particular, the off-diagonal structure of the reaction matrix $\mathbb{K}$ leads to nontrivial cross-interactions that must be carefully controlled to obtain uniform energy estimates. To overcome these challenges, we adapt the Faedo-Galerkin method to the coupled setting, incorporating a recent basis-change technique introduced by Bomisso et al. in \cite{Kouma}. This strategy is used to handle the difficulties caused by coupling at the finite-dimensional approximation level and facilitating the derivation of key a priori estimates.\\
\noindent Our contribution consists in developing a comprehensive analytical framework for strongly coupled systems, extending classical existence and regularity results to a broader class of parabolic models. We address the challenges posed by cross-coupling interactions and derive uniform energy estimates that are essential for both theoretical insight and future numerical approximations.\\
\indent The paper is organized as follows. Section 2 deals with the notations used throughout the article. In Section 3, we give the definition of the weak solution adapted to the coupled structure of the system. Section 4 is devoted to the proof of existence and uniqueness of the weak solution of $(\Sigma_T)$, based on a Faedo-Galerkin method. We establish, in Section 5, an additional regularity result under further assumptions on the data.

\section{Preliminaries}

This section collects the notations, function spaces and norms used throughout this paper.\\

	We begin by introducing the following Hilbert space
\begin{equation*}
	\mathbb{H}_2 = \left( L^2(0,1) \right)^4,  
\end{equation*}
endowed with its natural inner product, whose corresponding norm is denoted by $\left\Vert . \right\Vert_{\mathbb{H}_2}$. More precisely, for any $\boldsymbol{\phi} = \left( \boldsymbol{\phi}_1,\; \boldsymbol{\phi}_2, \; \boldsymbol{\phi}_3, \; \boldsymbol{\phi}_4 \right)^{\top} \in \mathbb{H}_2 $, 
\begin{equation*}
	\left\Vert \boldsymbol{\phi} \right\Vert_{\mathbb{H}_2}^2 = \displaystyle\sum_{j=1}^4 \left\Vert \boldsymbol{\phi}_j \right\Vert_{2}^2 = \displaystyle\sum_{j=1}^4 \langle \boldsymbol{\phi}_j, \boldsymbol{\phi}_j \rangle,
\end{equation*}
where $\langle \cdot, \cdot \rangle$ denotes the canonical inner product on $L^2(0,1)$ and $\| \cdot \|_2$ its associated norm.\\

\noindent For each region $\alpha \in \mathcal{I}$, let us define the following Hilbert spaces:
\begin{equation*}
	\mathcal{V}_f = \left\{ u \in H^1(0,1) : u(0) = 0 \right\}, \quad
	\mathcal{V}_g = \left\{ w \in H^1(0,1) : w(1) = 0 \right\} \; \text{ and } \;
	\mathcal{V}_s = \mathcal{V}_p = H^1(0,1)
\end{equation*}
endowed respectively with the inner products
		\begin{align*}
		\langle u_1, u_2 \rangle_{\mathcal{V}_f}  & =   \left\langle E_f \partial_x u_1, \partial_x u_2 \right\rangle +  \langle K_1 u_1 , u_2 \rangle, \quad  u_1, \, u_2 \in \mathcal{V}_f , \\
		\langle v_1, v_2 \rangle_{\mathcal{V}_s}   & =   \left\langle E_s \partial_x v_1, \partial_x v_2 \right\rangle +  \langle (K_2 + K_3) v_1 , v_2 \rangle, \quad  v_1, \, v_2 \in \mathcal{V}_s ,\\
		\langle w_1, w_2 \rangle_{\mathcal{V}_g}  & =   \left\langle E_g \partial_x w_1, \partial_x w_2 \right\rangle + \langle (K_4 + K_5 )  w_1 , w_2 \rangle, \quad  w_1, \, w_2 \in \mathcal{V}_g ,\\
		\langle z_1, z_2 \rangle_{\mathcal{V}_p}  & =   \left\langle E_p \partial_x z_1, \partial_x z_2 \right\rangle + \langle  K_6 z_1 , z_2 \rangle, \quad  z_1, \, z_2 \in \mathcal{V}_p
	\end{align*}
with the corresponding norms induced by these inner products, respectively denoted by $\left\Vert \cdot \right\Vert_{\mathcal{V}_{\alpha}},$ for each $\alpha \in \mathcal{I}$.

\noindent We then define the global product space 
\begin{equation*}
	\mathcal{V} = \prod_{\alpha \in \mathcal{I}} \mathcal{V}_{\alpha}
\end{equation*}
and its dual is
\begin{equation*}
\mathcal V' = \prod_{\alpha \in \mathcal{I}} \mathcal{V}_{\alpha}^{\prime},
\end{equation*}
where each space $\mathcal{V}_\alpha^{\prime}$ is understood as the dual of the corresponding Hilbert space $\mathcal{V}_\alpha$, for $\alpha \in \mathcal{I}$. Accordingly, we denote by $
	\left\langle \cdot, \cdot \right\rangle_{\mathcal{V}_{\alpha}', \mathcal{V}_{\alpha}}$
the duality pairing between $ \mathcal{V}_{\alpha}'$ and $ \mathcal{V}_{\alpha}$, for each $\alpha \in \mathcal{I}$.\\

For all $\boldsymbol{\Phi}_1=(u_1,v_1,w_1,z_1)^\top,\;     \boldsymbol{\Psi}_2=(u_2,v_2,w_2,z_2)^\top\in\mathcal V$, we define the operators $\mathcal{L},\;\mathcal{A}$ and $\mathcal{K}$ on $\mathcal{V}^2$, with values in $\mathbb{R}^4$, by
\begin{equation*}
	\mathcal{L}(\boldsymbol{\Phi}_1, \boldsymbol{\Psi}_2) = 
	\begin{pmatrix}
		\langle u_1, u_2 \rangle \\
		\langle v_1, v_2 \rangle \\
		\langle w_1, w_2 \rangle \\
		\langle z_1, z_2 \rangle
	\end{pmatrix}, \quad
	\mathcal{A}(\boldsymbol{\Phi}_1, \boldsymbol{\Psi}_2) = 
	\begin{pmatrix}
		a_f(u_1, u_2) \\
		a_s(v_1, v_2) \\
		a_g(w_1, w_2) \\
		a_p(z_1, z_2)
	\end{pmatrix}, \quad \mathcal{K}(\boldsymbol{\Phi}_1, \boldsymbol{\Psi}_2) = 
	\begin{pmatrix}
	\left\langle K_1 v_1, u_2 \right\rangle \\
	\left\langle K_2 u_1 + K_3 w_1, v_2 \right\rangle \\
	\left\langle K_4 v_1 + K_5 z_1, w_2 \right\rangle \\
	\left\langle K_6 w_1, z_2 \right\rangle
	\end{pmatrix},
\end{equation*}
where $a_{\alpha}(\cdot,\cdot)$ are the bilinear forms defined on the corresponding subspaces $\mathcal{V}_{\alpha}$, for each $\alpha \in \mathcal{I}$, as follows:
\begin{align*}
	a_f(u_1, u_2) & = \left\langle E_f \partial_x u_1, \partial_x u_2 \right\rangle 
	+ f_f \left\langle \partial_x u_1 , u_2 \right\rangle 
	+ \left\langle K_1 u_1 , u_2 \right\rangle, \quad
	a_s(v_1, v_2) = \left\langle E_s \partial_x v_1, \partial_x v_2 \right\rangle 
	+ \left\langle (K_2 + K_3) v_1, v_2 \right\rangle, \\
	a_g(w_1, w_2) &= \left\langle E_g \partial_x w_1, \partial_x w_2 \right\rangle 
	- f_g \left\langle \partial_x w_1, w_2 \right\rangle 
	+ \left\langle (K_4 + K_5) w_1, w_2 \right\rangle, \quad
	a_p(z_1, z_2) = \left\langle E_p \partial_x z_1, \partial_x z_2 \right\rangle 
	+ \left\langle K_6 z_1, z_2 \right\rangle \cdotp
\end{align*}
Note that the operator $ \mathcal{K} $ encodes the coupling terms of the reaction matrix $ \mathbb{K} $, which are often referred to as the block-off-diagonal components.\\

We will also use the notation \( \mathcal{L}^*(\cdot, \cdot) \) to denote the same operator as \( \mathcal{L}(\cdot, \cdot) \), but where each \( L^2(0,1) \) inner product is replaced by the duality pairing between \( \mathcal{V}_{\alpha}' \) and \( \mathcal{V}_{\alpha} \).

We adopt the notation \( X \preceq Y \) for vectors \( X = (x_1, \dots, x_n)^{\top}, \; Y = (y_1, \dots, y_n)^{\top} \in \mathbb{R}^n \) to denote componentwise inequality, that is,
\[
X \preceq Y \quad \Longleftrightarrow \quad x_i \leq y_i, \quad \text{for all } i = 1, \dots, n.
\]

\noindent Let $T>0$ be fixed. Consider a vector-valued function \( \boldsymbol{u}(t) = \left(u_1(t), u_2(t), \dots, u_n(t) \right)^{\top} \) defined on \( [0,T] \), where each component \( u_j \), \( j = 1, \dots, n \), is integrable on \( [0,T] \). Then, we define the integral of \( \boldsymbol{u} \) over \( [0,T] \) componentwise by the notation
\[
\displaystyle\int_0^T \boldsymbol{u}(t) \, \text{d}t = \begin{pmatrix}
	\displaystyle\int_0^T u_1(t) \, \text{d}t \\
	\displaystyle\int_0^T u_2(t) \, \text{d}t \\
	\vdots \\
	\displaystyle\int_0^T u_n(t) \, \text{d}t
\end{pmatrix} \cdotp
\]
This notation will be used throughout the work when integrating vector-valued functions componentwise over a given interval.

\section{Definition of the weak solution}
The purpose of this section is to define the weak solution of the problem $(\Sigma_T)$.\\

Set $\mathbf{U} = \left( U_{\alpha} \right)_{\alpha \in \mathcal{I}}^{\top}$ such that
	\begin{equation*}
	\mathbf{U} = \mathbf{T} - \left( f_0,\; 0,\; g_0,\; 0 \right)^{\top}.
\end{equation*}

Thus, system $(\Sigma_T)$ is equivalent to the following:
	\begin{equation}
	\label{e1}
	\partial_t \mathbf{U}(x,t) = \partial_x \left( \mathbb{E}(x)  \partial_x \mathbf{U}(x,t)  \right) - \mathbb{F}  \partial_x \mathbf{U}(x,t)  + \mathbb{K}(x) \mathbf{U}(x,t) + \mathbb{S}(x,t), \quad x \in (0,1), \; t >0,
\end{equation}
with \begin{equation*}
	\mathbb{S}(x,t) = \left( S_i(x,t) \right)_{i=1,...,4}^{\top} = \begin{bmatrix}
		-f_0 K_1(x) + S_f(t)\\
		f_0 K_2(x) + g_0 K_3(x)  + S_s(t)\\
		- g_0 (K_4 + K_5)(x) + S_g(t)\\
		g_0 K_6(x) + S_p(t)
	\end{bmatrix},
\end{equation*}
subject to the following boundary and initial conditions:
	\begin{align}
	\label{e2}
	\begin{split}
		& U_f(0,t) = T_g(1,t) = 0, \quad t>0, \\
	   	& \left(E_{\alpha_1} (\cdot)\, \partial_x U_{\alpha_1} (.,t) \right)(0) = \left(E_{\alpha_2} (\cdot)\, \partial_x U_{\alpha_2} (.,t) \right)(1) = 0 , \quad t > 0, \; \alpha_1 \in \{g, s, p\}, \; \alpha_2 \in \{f, s, p\},\\
	   	& \mathbf{U}_0 = \mathbf{U}(x,0) = \left( U_{\alpha}(x,0) \right)_{\alpha \in \mathcal{I}}^{\top} = \left( U_{\alpha}^0(x) \right)_{\alpha \in \mathcal{I}}^{\top} \cdotp
	\end{split}
\end{align}

Let $ \boldsymbol{\phi} = \left(\boldsymbol{\phi}_1, \; \boldsymbol{\phi}_2, \; \boldsymbol{\phi}_3, \; \boldsymbol{\phi}_4   \right)^{\top} \in \mathcal{V}$. Multiply \eqref{e1} by $\text{ diag }\left( \boldsymbol{\phi}_1, \; \boldsymbol{\phi}_2, \; \boldsymbol{\phi}_3, \; \boldsymbol{\phi}_4  \right)$, integrate over $(0,1)$ and use the boundary conditions \eqref{e2} to obtain the following variational formulation:

 Find
\begin{equation*}
	 \mathbf{U} \,: \: (0,T] \longrightarrow \mathcal{V} \cap \left(H^2(0,1)\right)^4 
\end{equation*}
such that, for every $t > 0$ and for all $\boldsymbol{\phi} \in \mathcal{V}$, the following equalities hold:

\begin{equation}
	\label{e3}
		\mathcal{L} \left( \partial_t \mathbf{U}(t), \boldsymbol{\phi} \right) + \mathcal{A} \left(\mathbf{U}(t), \boldsymbol{\phi} \right) - \mathcal{K} \left(\mathbf{U}(t), \boldsymbol{\phi} \right) = \mathcal{L} \left(\mathbb{S}(x,t), \boldsymbol{\phi} \right)
\end{equation}
and
\begin{equation}
	\label{e4}
	\mathbf{U}_0 = \mathbf{U}(x,0) .
\end{equation}

\begin{remark}
	Equation \eqref{e1} implies that the time derivative 
	\(\partial_t \mathbf{U}\) belongs to the space $\mathcal{V}^{\prime}$.
\end{remark}

The above considerations lead to the following definition.

\begin{definition}
	\label{definition1}
	Let $ T > 0 $ be fixed. We say  a function 
	\begin{equation*}
		\label{e5}
		\mathbf{\hat{U}} = \left( \hat{U}_{\alpha} \right)_{\alpha \in \mathcal{I}}^{\top} \in L^2 \left( 0,T; \mathcal{V} \right) \text{ with } \partial_t \mathbf{\hat{U}} \in L^2 \left( 0,T; \mathcal{V}^{\prime} \right), 
	\end{equation*}
	is a weak solution to the problem \eqref{e3}-\eqref{e4} if
	\begin{itemize}
		\item[\textbf{(i)}] 
		\begin{equation*}
			\label{e6}
		\mathcal{L}^{\ast} \left( \partial_t \mathbf{\hat{U}}(t), \boldsymbol{\phi} \right) + \mathcal{A} \left(\mathbf{\hat{U}}(t), \boldsymbol{\phi} \right) - \mathcal{K} \left(\mathbf{\hat{U}}(t), \boldsymbol{\phi} \right) = \mathcal{L} \left(\mathbb{S}(t), \boldsymbol{\phi} \right)
		\end{equation*}
		for almost every \( t \in [0,T] \), for all \( \boldsymbol{\phi} \in \mathcal{V} \) and
		\item[\textbf{(ii)}]
		\begin{equation*}
			\label{e7}
			\mathbf{\hat{U}} (0) = \mathbf{\hat{U}}_0 = \mathbf{U}_0 \cdotp
		\end{equation*} 
		
	\end{itemize}

\end{definition}

\section{Existence and uniqueness of the weak solution}	
We aim to show that problem \eqref{e1}-\eqref{e2} admits a unique weak solution, depending continuously on the data in a suitable norm.
\subsection{Existence of the weak solution}
 We employ the Faedo-Galerkin method \cite{Lions1969}, which is also well-suited for numerical approximations.\\
We construct approximate solutions through a finite-dimensional Galerkin scheme, derive uniform a priori estimates and pass to the limit to obtain a weak solution, recover the regularity of $\partial_t \mathbf{\hat{U}}$ and verify the initial condition.

\subsubsection{Construction of approximate solutions}
We first construct approximate solutions using a finite-dimensional Galerkin scheme.\\
We state a lemma concerning the existence of a basis for the space $\mathcal{V}$.

\begin{lemma}
	\label{lemme1}
	There exists a sequence of functions $\left\{ \boldsymbol{\varPsi}_i^{(\alpha)} \right\}_{i=1}
^{\infty}$ for each $\alpha \in \mathcal{I}$, such that:
\begin{itemize}
	\item  $\left\{ e_{\alpha} \left( \boldsymbol{\varPsi}_i^{(\alpha)} \right): \; \alpha \in \mathcal{I} \right\}_{i=1}
	^{\infty}$ forms an orthogonal basis of $\mathcal{V}$,
	\item  $\left\{ e_{\alpha} \left( \boldsymbol{\varPsi}_i^{(\alpha)} \right): \; \alpha \in \mathcal{I} \right\}_{i=1}
	^{\infty}$ forms an orthonormal basis of $\mathbb{H}_2$,
\end{itemize}
where 
\begin{align*}
e_{\alpha} \left(\boldsymbol{\varPsi}_i^{(\alpha)}   \right) = \bigl(\delta_{\alpha f}\, \boldsymbol{\varPsi}_i^{(\alpha)},\;\delta_{\alpha s}\, \boldsymbol{\varPsi}_i^{(\alpha)},\;\delta_{\alpha g}\,\boldsymbol{\varPsi}_i^{(\alpha)},\;\delta_{\alpha p}\,\boldsymbol{\varPsi}_i^{(\alpha)}\bigr), \quad i \geq 1, \; \alpha \in \mathcal{I},
\end{align*}
with $\delta_{\alpha k}$ the Kronecker symbol on $\mathcal{I}$.
\end{lemma}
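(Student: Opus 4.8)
The plan is to exploit the product structure of $\mathcal{V}$ and $\mathbb{H}_2$ to reduce the construction to a single block, and then obtain each block's basis from the spectral theorem for a compact self-adjoint operator, i.e.\ by simultaneously diagonalizing the $\mathcal{V}_\alpha$-inner product against the $L^2(0,1)$-inner product. First I would reduce to the scalar problem. Since $\mathcal{V}=\prod_{\alpha\in\mathcal I}\mathcal V_\alpha$ and $\mathbb H_2=\prod_{\alpha\in\mathcal I}L^2(0,1)$ carry the product inner products (sums over the four components), and since $e_\alpha(\psi)$ places $\psi$ only in the $\alpha$-th slot and zero elsewhere, two embedded vectors arising from distinct indices $\alpha\neq\beta$ have disjoint coordinate supports and are therefore automatically orthogonal, both in $\mathcal V$ and in $\mathbb H_2$. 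Hence it suffices to produce, for each fixed $\alpha\in\mathcal I$, a sequence $\{\boldsymbol\varPsi_i^{(\alpha)}\}_{i\ge1}\subset\mathcal V_\alpha$ that is orthogonal for $\langle\cdot,\cdot\rangle_{\mathcal V_\alpha}$ and orthonormal for $\langle\cdot,\cdot\rangle$; the full family is then assembled as the union over $\alpha$ of the embedded sequences.

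Next, for fixed $\alpha$ I would set up the generalized eigenvalue problem
\[
\langle \boldsymbol\varPsi,\boldsymbol\phi\rangle_{\mathcal V_\alpha}=\lambda\,\langle\boldsymbol\varPsi,\boldsymbol\phi\rangle,\qquad\text{for all }\boldsymbol\phi\in\mathcal V_\alpha.
\]
The coefficients guarantee that $\langle\cdot,\cdot\rangle_{\mathcal V_\alpha}$ is a genuine inner product whose norm is equivalent to the $H^1(0,1)$-norm on $\mathcal V_\alpha$: the diffusion term is coercive since $E_\alpha\ge c>0$ on $[0,1]$, with the Poincaré inequality supplying $L^2$-control on the constrained spaces $\mathcal V_f,\mathcal V_g$, and the reaction term supplying it on $\mathcal V_s=\mathcal V_p=H^1(0,1)$ (using a positive lower bound on the corresponding exchange coefficients). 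Consequently the embedding $\mathcal V_\alpha\hookrightarrow L^2(0,1)$ is compact by the Rellich theorem. By Lax--Milgram I define the solution operator $T_\alpha:L^2(0,1)\to\mathcal V_\alpha\hookrightarrow L^2(0,1)$, $g\mapsto u$, where $u$ solves $\langle u,\boldsymbol\phi\rangle_{\mathcal V_\alpha}=\langle g,\boldsymbol\phi\rangle$ for all $\boldsymbol\phi\in\mathcal V_\alpha$. A direct check shows $T_\alpha$ is self-adjoint and positive on $L^2(0,1)$, and it is compact because it factors through the compact embedding. The spectral theorem for compact self-adjoint operators then yields an $L^2(0,1)$-orthonormal basis $\{\boldsymbol\varPsi_i^{(\alpha)}\}_{i\ge1}$ of eigenfunctions with eigenvalues $\mu_i\downarrow0$; setting $\lambda_i=1/\mu_i$ gives $\langle\boldsymbol\varPsi_i^{(\alpha)},\boldsymbol\phi\rangle_{\mathcal V_\alpha}=\lambda_i\langle\boldsymbol\varPsi_i^{(\alpha)},\boldsymbol\phi\rangle$ for all $\boldsymbol\phi\in\mathcal V_\alpha$.

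Finally I would read off the two orthogonality properties. Taking $\boldsymbol\phi=\boldsymbol\varPsi_j^{(\alpha)}$ in the eigen-relation gives $\langle\boldsymbol\varPsi_i^{(\alpha)},\boldsymbol\varPsi_j^{(\alpha)}\rangle_{\mathcal V_\alpha}=\lambda_i\delta_{ij}$, so the sequence is $\mathcal V_\alpha$-orthogonal while remaining $L^2$-orthonormal. Completeness in $\mathcal V_\alpha$ follows because any $u\in\mathcal V_\alpha$ with $\langle u,\boldsymbol\varPsi_i^{(\alpha)}\rangle_{\mathcal V_\alpha}=0$ for all $i$ satisfies $\lambda_i\langle u,\boldsymbol\varPsi_i^{(\alpha)}\rangle=0$, hence is $L^2$-orthogonal to a complete system and thus vanishes. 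Assembling the four blocks through $e_\alpha$ and invoking the automatic cross-block orthogonality then produces a family that is simultaneously an orthogonal basis of $\mathcal V$ and an orthonormal basis of $\mathbb H_2$, as claimed.

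I expect the main obstacle to be precisely the simultaneous diagonalization step, namely justifying that one and the same sequence can be orthogonal for $\langle\cdot,\cdot\rangle_{\mathcal V_\alpha}$ and orthonormal for $\langle\cdot,\cdot\rangle$. This rests entirely on the compactness of $\mathcal V_\alpha\hookrightarrow L^2(0,1)$ together with the self-adjointness of $T_\alpha$, so the real care lies in verifying the norm equivalence (and hence coercivity) that makes these two structural facts available, especially on $\mathcal V_s$ and $\mathcal V_p$, where only the reaction term controls the $L^2$-part and a positive lower bound on the exchange coefficients must be secured.
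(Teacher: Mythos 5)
Your proposal is correct, but it takes a genuinely different --- and substantially more careful --- route than the paper. The paper's own proof is two lines: it invokes separability of each $\mathcal{V}_\alpha$ to obtain an orthogonal basis of $\mathcal{V}$, and then asserts that, since $\mathcal{V}$ is an infinite-dimensional separable Hilbert space and hence isomorphic to $\mathbb{H}_2$, the same family is an orthonormal basis of $\mathbb{H}_2$. That second step is precisely the point you identify as the crux: an abstract isomorphism between $\mathcal{V}$ and $\mathbb{H}_2$ does not make a given $\mathcal{V}$-orthogonal family orthogonal for the $L^2$ inner product, because the isomorphism is not the inclusion map; an arbitrary orthogonal basis of $\mathcal{V}$ will in general fail to be orthogonal in $\mathbb{H}_2$. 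Your argument supplies what is actually needed: reduction to single blocks via the disjoint supports of the $e_\alpha$-embeddings, then, for each $\alpha$, compactness of $\mathcal{V}_\alpha \hookrightarrow L^2(0,1)$, the Lax--Milgram solution operator $T_\alpha$, its self-adjointness (which holds because the $\langle \cdot,\cdot\rangle_{\mathcal{V}_\alpha}$ are the symmetric inner products from the preliminaries, not the nonsymmetric forms $a_\alpha$), and the spectral theorem, giving one family that is simultaneously $\mathcal{V}_\alpha$-orthogonal and $L^2$-orthonormal, with completeness in both topologies as you describe. This is the classical eigenfunction construction (as in Evans for the heat equation) and is the standard rigorous justification of such a lemma; it also produces exactly the kind of basis the Galerkin scheme later exploits. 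The one caveat is the hypothesis you already flag: on $\mathcal{V}_s=\mathcal{V}_p=H^1(0,1)$ the equivalence of $\Vert\cdot\Vert_{\mathcal{V}_\alpha}$ with the $H^1$-norm requires the reaction coefficients to control the $L^2$-part, whereas the paper only assumes the $K_j$ positive and continuous on the open interval; strictly one should either impose uniform upper and lower bounds or replace the lower bound by a Poincar\'e-type inequality using positivity of $\int_0^1 K_j$ on a compact subinterval. With that clarified, your construction goes through in full.
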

\begin{proof}
	 Since each space $ \mathcal{V}_\alpha $, with $\alpha \in \mathcal{I}$, is separable, each admits an orthogonal basis $ \left\{\boldsymbol{\varPsi}_i^{(\alpha)}\right\}_{i=1}^{\infty} $(see \cite{Daniel}, Theorem 2.4.3).  
		As a consequence, the space $\mathcal{V}$ also admits an orthogonal basis  $\left\{ e_{\alpha} \left( \boldsymbol{\varPsi}_i^{(\alpha)} \right): \; \alpha \in \mathcal{I} \right\}_{i=1}
		^{\infty}$.\\
		
		\noindent For the second assertion, observe that $\mathcal{V}$ is an infinite-dimensional separable Hilbert space and thus is isomorphic to $\mathbb{H}_2$ (see \cite{Daniel}, Theorem 2.3.15 and Corollary 2.4.4). Hence, $\left\{ e_{\alpha} \left( \boldsymbol{\varPsi}_i^{(\alpha)} \right): \; \alpha \in \mathcal{I} \right\}_{i=1}
		^{\infty}$
		is an orthonormal basis of $\mathbb{H}_2$.
\end{proof}

\begin{theorem}[\textbf{Construction of approximate solutions}]
	\label{theoreme1}
Let $T>0$ be fixed.	For all $m \in \mathbb{N}^{\ast}$, there exists a unique function $\mathbf{\hat{U}}^m = \left( \hat{U}_{\alpha}^m \right)_{\alpha \in \mathcal{I}}^{\top}$ of the form
	\begin{equation}
		\label{e8}
		\mathbf{\hat{U}}^m (x,t)  = \left( \displaystyle\sum_{j=1}^m \alpha_j^m (t) \, \boldsymbol{\varPsi}_j^{(\alpha)} (x) \right)_{\alpha \in \mathcal{I}}^{\top},\quad \alpha_j^m \in C \left([0,T], \mathbb{R} \right) \; (j=1,...,m, \; \alpha \in \mathcal{I})
	\end{equation}
	satisfying for all $i=1,...,m,$
	\begin{equation}
		\label{e9}
		\mathcal{L} \left( \partial_t \mathbf{\hat{U}}^m(t),  \left( \boldsymbol{\varPsi}_i^{(\alpha)} \right)_{\alpha \in \mathcal{I}}^{\top} \right) + \mathcal{A} \left(\mathbf{\hat{U}}^m(t), \left( \boldsymbol{\varPsi}_i^{(\alpha)} \right)_{\alpha \in \mathcal{I}}^{\top} \right) - \mathcal{K} \left(\mathbf{\hat{U}}^m(t), \left( \boldsymbol{\varPsi}_i^{(\alpha)} \right)_{\alpha \in \mathcal{I}}^{\top} \right) = \mathcal{L} \left(\mathbb{S}(x,t), \left( \boldsymbol{\varPsi}_i^{(\alpha)} \right)_{\alpha \in \mathcal{I}}^{\top} \right)
	\end{equation}
	for almost every $ t \in [0,T] $ and
	\begin{equation}
		\label{e10}
		\left( \alpha_j^m(0) \right)_{\alpha \in \mathcal{I}}^{\top} = \mathcal{L} \left( \mathbf{\hat{U}}_0, \left( \boldsymbol{\varPsi}_j^{(\alpha)} \right)_{\alpha \in \mathcal{I}}^{\top} \right), \quad j=1,...,m . 
	\end{equation}
\end{theorem}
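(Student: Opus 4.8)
The plan is to turn the variational identity \eqref{e9} into a linear Cauchy problem for the coefficient functions and then invoke the standard existence–uniqueness theory for linear ODE systems on the whole interval $[0,T]$. First I would insert the ansatz \eqref{e8} into \eqref{e9} and use the bilinearity of $\mathcal{L}$, $\mathcal{A}$ and $\mathcal{K}$ in each argument, collecting the $4m$ scalar coefficients $\alpha_j^m(t)$ ($j=1,\dots,m$, $\alpha\in\mathcal{I}$) into a single vector $\mathbf{c}(t)\in\mathbb{R}^{4m}$. The decisive step is the time-derivative term $\mathcal{L}\bigl(\partial_t\mathbf{\hat U}^m(t),(\boldsymbol{\varPsi}_i^{(\alpha)})_{\alpha\in\mathcal{I}}^{\top}\bigr)$: its $\alpha$-component equals $\sum_{j=1}^m \dot\alpha_j^m(t)\,\langle \boldsymbol{\varPsi}_j^{(\alpha)},\boldsymbol{\varPsi}_i^{(\alpha)}\rangle$, and since Lemma \ref{lemme1} guarantees that $\{\boldsymbol{\varPsi}_i^{(\alpha)}\}_i$ is $L^2(0,1)$-orthonormal within each region, this collapses to $\dot\alpha_i^m(t)$. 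Hence the mass matrix attached to $\dot{\mathbf{c}}$ is the identity and the reduced system is genuinely explicit.

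Next I would assemble the remaining terms. The operator $\mathcal{A}$ contributes, region by region, the constant matrix with entries $a_\alpha\bigl(\boldsymbol{\varPsi}_j^{(\alpha)},\boldsymbol{\varPsi}_i^{(\alpha)}\bigr)$, all finite because each $a_\alpha$ is continuous on $\mathcal{V}_\alpha\times\mathcal{V}_\alpha$ under the stated boundedness hypotheses on $E_\alpha$, $f_\alpha$ and the $K_j$. The coupling operator $\mathcal{K}$ produces constant off-diagonal blocks with entries of the form $\langle K_1\boldsymbol{\varPsi}_j^{(s)},\boldsymbol{\varPsi}_i^{(f)}\rangle$, linking the equation of one region to the coefficients of a neighbouring one, while the source $\mathcal{L}\bigl(\mathbb{S}(\cdot,t),\cdot\bigr)$ yields a vector $\mathbf{b}(t)$ with components $\langle S_i(\cdot,t),\boldsymbol{\varPsi}_i^{(\alpha)}\rangle$. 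Because the $S_\alpha$ are Lipschitz continuous in time and the $K_j$ are continuous, $\mathbf{b}\in C\bigl([0,T];\mathbb{R}^{4m}\bigr)$. Collecting everything, \eqref{e9} is equivalent to $\dot{\mathbf{c}}(t)=M\mathbf{c}(t)+\mathbf{b}(t)$ on $[0,T]$, where $M$ is the constant $4m\times 4m$ matrix obtained by subtracting the $\mathcal{A}$-block from the $\mathcal{K}$-block, together with the initial value $\mathbf{c}(0)$ prescribed componentwise by \eqref{e10}.

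Finally, since $M$ is constant and $\mathbf{b}$ is continuous, the Carathéodory / linear-ODE theory provides a unique absolutely continuous (indeed $C^1$) solution on the whole of $[0,T]$, given explicitly by the variation-of-constants formula $\mathbf{c}(t)=e^{tM}\mathbf{c}(0)+\int_0^t e^{(t-\tau)M}\mathbf{b}(\tau)\,\mathrm{d}\tau$. Consequently each $\alpha_j^m\in C\bigl([0,T],\mathbb{R}\bigr)$, the function $\mathbf{\hat U}^m$ of the form \eqref{e8} is uniquely determined, and \eqref{e9} holds for a.e. (in fact every) $t\in[0,T]$, which yields both existence and uniqueness.

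I expect the only genuinely delicate point to be the decoupling of the time-derivative term into $\dot{\mathbf{c}}$, which is precisely where the simultaneous $\mathcal{V}$-orthogonality and $\mathbb{H}_2$-orthonormality of the basis from Lemma \ref{lemme1} is essential; without this basis-change device one would have to invert a nontrivial Gram matrix before applying the ODE theory. By contrast the coupling operator $\mathcal{K}$, which is the main source of difficulty elsewhere in the analysis, here only inserts bounded constant off-diagonal blocks into $M$ and therefore poses no threat to solvability at the finite-dimensional level; the remaining care is simply to verify that all inner products defining $M$ and $\mathbf{b}(t)$ are finite and that $\mathbf{b}$ is regular enough in $t$ for the existence theorem to apply.
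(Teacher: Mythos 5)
Your proposal is correct and follows the same overall skeleton as the paper: insert the ansatz \eqref{e8} into \eqref{e9}, use the $\mathbb{H}_2$-orthonormality from Lemma \ref{lemme1} to collapse the time-derivative term so that the mass matrix is the identity, reduce to a linear first-order system for the $4m$ coefficients with the $\mathcal{A}$-blocks on the diagonal and the $\mathcal{K}$-blocks off the diagonal, and conclude by linear ODE theory with the initial data \eqref{e10}. The one genuine divergence is the treatment of the coupling operator. The paper insists that $\mathcal{K}$ ``prevents the direct use of standard Galerkin projections'' and introduces a change of basis $\boldsymbol{\varPsi}_j^{(\alpha)} = \sum_{k=1}^m \gamma_j^{k,(\alpha,\eta)}\,\boldsymbol{\varPsi}_k^{(\eta)}$ before assembling the off-diagonal blocks, whereas you assemble them directly from the cross-Gram entries such as $\langle K_1\boldsymbol{\varPsi}_j^{(s)},\boldsymbol{\varPsi}_i^{(f)}\rangle$. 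Your route is the more economical one and is perfectly sound at the finite-dimensional level: bilinearity of $\mathcal{K}$ in its first argument is all that is needed, and these entries are finite under the stated hypotheses. It also sidesteps a weakness of the paper's device, namely that a basis element of $\mathcal{V}_s$ is not in general a \emph{finite} linear combination of the first $m$ basis elements of $\mathcal{V}_f$, so the truncated expansion used in the paper requires additional justification that your argument does not. The only other difference is cosmetic: you invoke the variation-of-constants formula for a constant-coefficient system (giving $C^1$ coefficients directly), while the paper cites Picard--Lindel\"of for an absolutely continuous solution; both deliver the claimed existence, uniqueness and continuity of the $\alpha_j^m$ on $[0,T]$.
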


\begin{proof}
	Let $T>0$ and $m \in \mathbb{N}^{\ast}$ be fixed. Assuming $\mathbf{\hat{U}}^m$ has the structure \eqref{e8}, we first note from Lemma \ref{lemme1} that, for each fixed $i \in \{ 1,...,m \},$
	\begin{align*}
	\mathcal{L} \left( \partial_t \mathbf{\hat{U}}^m(t), \left( \boldsymbol{\varPsi}_i^{(\alpha)} \right)_{\alpha \in \mathcal{I}}^{\top} \right) = \left( \dfrac{\text{d}}{\text{d}t} \alpha_i^m (t) \right)_{\alpha \in \mathcal{I}}^{\top}.
	\end{align*}
	Futhermore, for fixed $i \in \{ 1,...,m \},$
	\begin{align*}
		\mathcal{A} \left(\mathbf{\hat{U}}^m(t), \left( \boldsymbol{\varPsi}_i^{(\alpha)} \right)_{\alpha \in \mathcal{I}}^{\top} \right) = \left( \displaystyle\sum_{j =1}^m \beta_{\alpha}^{i j} \alpha_j^m(t)  \right)_{\alpha \in \mathcal{I}}^{\top}, 
	\end{align*}
	with $\beta_{\alpha}^{i j} = a_{\alpha} \left( \boldsymbol{\varPsi}_{j}^{(\alpha)}, \boldsymbol{\varPsi}_i^{(\alpha)}\right)$, $j \in \left\{ 1,...,m \right\} $ and for $ \alpha \in \mathcal{I} .  $\\
	\noindent Next, the presence of the coupling operator $ \mathcal{K} $, involving functions from distinct approximation spaces, prevents the direct use of standard Galerkin projections. Following Bomisso et al. \cite{Kouma}, we perform a change of basis to express each function $ \boldsymbol{\varPsi}_j^{(\alpha)}, \; \alpha \in \mathcal{I}, $ in the basis associated with the equation where it is tested. Thus, for each $\alpha, \eta \in \mathcal{I}$ and each $j=1,...,m$, there exist real coefficients $\gamma_j^{k,(\alpha,\eta)}$ such that
	\begin{equation*}
		\label{change_basis}
	\boldsymbol{\varPsi}_j^{(\alpha)} = \sum_{k=1}^m \gamma_j^{k,(\alpha,\eta)} \, \boldsymbol{\varPsi}_k^{(\eta)}.
	\end{equation*}
	It follows for all $i = 1,...,m,$ that
	\begin{equation*}
		 \mathcal{K} \left(\mathbf{\hat{U}}^m(t), \left( \boldsymbol{\varPsi}_i^{(\alpha)} \right)_{\alpha \in \mathcal{I}}^{\top} \right) = \begin{bmatrix}
		 &	 \langle \displaystyle\sum_{j=1}^m s_j^m(t) K_1 \sum_{k=1}^m \gamma_j^{k,(s,f)} \, \boldsymbol{\varPsi}_k^{(f)} , \boldsymbol{\varPsi}_i^{(f)} \rangle\\
		  &	 \langle \displaystyle\sum_{j=1}^m \left[ f_j^m(t) K_2  \sum_{k=1}^m \gamma_j^{k,(f,s)} \, \boldsymbol{\varPsi}_k^{(s)}  +  g_j^m(t) K_3  \sum_{k=1}^m \gamma_j^{k,(g,s)} \, \boldsymbol{\varPsi}_k^{(s)} \right] , \boldsymbol{\varPsi}_i^{(s)} \rangle\\
		  &	 \langle \displaystyle\sum_{j=1}^m \left[ s_j^m(t) K_4  \sum_{k=1}^m \gamma_j^{k,(s,g)} \, \boldsymbol{\varPsi}_k^{(g)}  +  p_j^m(t) K_5  \sum_{k=1}^m \gamma_j^{k,(p,g)} \, \boldsymbol{\varPsi}_k^{(g)} \right] , \boldsymbol{\varPsi}_i^{(g)} \rangle\\
		  &	 \langle \displaystyle\sum_{j=1}^m g_j^m(t) K_6 \sum_{k=1}^m \gamma_j^{k,(g,p)} \, \boldsymbol{\varPsi}_k^{(p)} , \boldsymbol{\varPsi}_i^{(p)} \rangle
		 \end{bmatrix} .
	\end{equation*}
	  
	\noindent Consequently, \eqref{e9} becomes a system of $4m$ ordinary differential equations with $4m$ unknowns, taking the form
	 \begin{equation}
	 	\label{e11}
	 	 \dfrac{\text{d}}{\text{d}t} \boldsymbol{\alpha}^m(t) + A \boldsymbol{\alpha}^m(t) = S(t),
	 \end{equation}
	 where $\boldsymbol\alpha^m(t)=\bigl( f^m(t), s^m(t), g^m(t), p^m(t)\bigr)^\top\in\mathbb R^{4m}$. The matrix $A$ is structured in $ 4\times4 $ blocks of size $m$:
	 \begin{equation*}
	 A
	 =
	 \begin{pmatrix}
	 	A_{ff} & A_{fs} & 0      & 0      \\
	 	A_{sf} & A_{ss} & A_{sg} & 0      \\
	 	0      & A_{gs} & A_{gg} & A_{gp} \\
	 	0      & 0      & A_{pg} & A_{pp}
	 \end{pmatrix}.
	 \end{equation*}
	 Here the diagonal blocks are
	 \begin{equation*}
	 \bigl(A_{\alpha\alpha}\bigr)_{ij}
	 \;=\;\beta_{\alpha}^{i j},
	 \quad i,j=1,...,m, \; \alpha \in \mathcal{I}
	 \end{equation*}
	 and the off-diagonal coupling blocks for $\alpha \neq \beta$, encoding the interactions between the different components of the system, are given, for $i,j=1,...,m$, by
	 \begin{align*}
	 \begin{aligned}
	 	(A_{fs})_{ij}
	 	&=\langle K_1\sum_{k=1}^m\gamma_j^{k,(s,f)}\,\boldsymbol\varPsi_k^{(f)},\,
	 	\boldsymbol\varPsi_i^{(f)} \rangle, \;
	 	(A_{sf})_{ij}
	 	=\langle K_2\sum_{k=1}^m\gamma_j^{k,(f,s)}\,\boldsymbol\varPsi_k^{(s)},\,
	 	\boldsymbol\varPsi_i^{(s)} \rangle, \;
	 	(A_{sg})_{ij}
	 	=\langle K_3\sum_{k=1}^m\gamma_j^{k,(g,s)}\,\boldsymbol\varPsi_k^{(s)},\,
	 	\boldsymbol\varPsi_i^{(s)} \rangle, \\[6pt]
	 	(A_{gs})_{ij}
	 	&= \langle K_4\sum_{k=1}^m\gamma_j^{k,(s,g)}\,\boldsymbol\varPsi_k^{(g)},\,
	 	\boldsymbol\varPsi_i^{(g)} \rangle, \;
	 	(A_{gp})_{ij}
	 	=\langle K_5\sum_{k=1}^m\gamma_j^{k,(p,g)}\,\boldsymbol\varPsi_k^{(g)},\,
	 	\boldsymbol\varPsi_i^{(g)} \rangle, \;
	 	(A_{pg})_{ij}
	 	=\langle K_6\sum_{k=1}^m\gamma_j^{k,(g,p)}\,\boldsymbol\varPsi_k^{(p)},\,
	 	\boldsymbol\varPsi_i^{(p)} \rangle \cdotp
	 \end{aligned}
	 \end{align*}
	 The each source-vector block \(S_\alpha(t)\in\mathbb{R}^m\) has components
	 \begin{equation*}
	 \bigl(S_\alpha(t)\bigr)_i
	 =\mathcal{L} \left( \mathbb{S}(x,t), \left( \boldsymbol{\varPsi}_i^{(\alpha)} \right)_{\alpha \in \mathcal{I}}^{\top} \right),
	 \quad i=1,\dots,m.
	 \end{equation*}
	The system \eqref{e9} is subject to the initial conditions \eqref{e10}. By the Picard-Lindel\"of Theorem (see \cite{peano}), there exists a unique absolutely continuous function $ \boldsymbol{\alpha}^m(t) $ that satisfies both \eqref{e9} and \eqref{e11} for almost every $ t \in [0, T] $. Therefore, the function $ \mathbf{\hat{U}}^m$ defined by \eqref{e8} also solves \eqref{e9} for almost every $ t \in [0, T] $.
\end{proof}

\subsubsection{A priori estimates}
Our goal is to prove that the Galerkin approximation \eqref{e8} of $\mathbf{\hat{U}}^m$ satisfies uniform a priori estimates, allowing the extraction of convergent subsequences to a weak solution of the original problem. These results ensure the limit passage in the approximate system \eqref{e11} as $m \to \infty$.

\begin{theorem}
	\label{theoreme2}
Let $ T > 0 $ be fixed. Then, for every $ m \in \mathbb{N}^\ast $, there exists a constant $ C > 0 $ such that
	\begin{align*}
		\label{e12}
		\begin{split}
		\max_{0 \leq t \leq T} \left( \left\Vert \mathbf{\hat{U}}^m(t) \right\Vert_{\mathbb{H}_2}^2 + \left\Vert \mathbf{\hat{U}}^m \right\Vert_{\left(L^2\left(0,T; \mathcal{V}\right)\right)^4}^2 + \left\Vert \partial_t \mathbf{\hat{U}}^m \right\Vert_{\left(L^2\left(0,T; \mathcal{V}^{\prime}\right)\right)^4}^2 \right) \leq C \left( \left\Vert \mathbf{\hat{U}}_0 \right\Vert_{\mathbb{H}_2}^2 + \left\Vert \mathbb{S} \right\Vert_{\left(L^2\left(0,T; \mathbb{H}_2\right)\right)^4}^2 \right) \cdotp
		\end{split}
	\end{align*}
\end{theorem}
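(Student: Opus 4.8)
The plan is to derive the estimate by the classical energy method, testing the Galerkin system \eqref{e9} against the approximate solution itself. Concretely, for each fixed $t$ I would multiply the $i$-th equation in \eqref{e9} by the coefficient $\alpha_i^m(t)$ and sum over $i=1,\dots,m$; by linearity this is the same as replacing the test vector $\left(\boldsymbol{\varPsi}_i^{(\alpha)}\right)_{\alpha\in\mathcal I}^{\top}$ by $\mathbf{\hat{U}}^m(t)$ itself, which yields
\[
\mathcal{L}\bigl(\partial_t\mathbf{\hat{U}}^m,\mathbf{\hat{U}}^m\bigr)+\mathcal{A}\bigl(\mathbf{\hat{U}}^m,\mathbf{\hat{U}}^m\bigr)-\mathcal{K}\bigl(\mathbf{\hat{U}}^m,\mathbf{\hat{U}}^m\bigr)=\mathcal{L}\bigl(\mathbb{S},\mathbf{\hat{U}}^m\bigr).
\]
I would then sum the four vector components of this identity. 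The time-derivative contribution collapses to $\tfrac12\frac{d}{dt}\|\mathbf{\hat{U}}^m\|_{\mathbb{H}_2}^2$, while I expect the diffusion–reaction part $\sum_{\alpha}a_\alpha(\hat{U}_\alpha^m,\hat{U}_\alpha^m)$ to reproduce essentially $\|\mathbf{\hat{U}}^m\|_{\mathcal V}^2$ up to the first-order convection terms.

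The delicate structural point is the convection contributions $f_f\langle\partial_x u,u\rangle$ and $-f_g\langle\partial_x w,w\rangle$ appearing in $a_f$ and $a_g$. I would integrate these by parts: each equals $\pm\tfrac12$ of the difference of squared boundary values, and the inflow boundary conditions $u(0)=0$ (fluid) and $w(1)=0$ (gas) kill one endpoint, leaving the nonnegative surviving terms $\tfrac{f_f}{2}u(1)^2$ and $\tfrac{f_g}{2}w(0)^2$. Hence the convection terms only reinforce coercivity, and I obtain $\sum_{\alpha}a_\alpha(\hat{U}_\alpha^m,\hat{U}_\alpha^m)\ge\|\mathbf{\hat{U}}^m\|_{\mathcal V}^2$.

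Next I would control the coupling and source contributions. Each entry of $\mathcal{K}$ is of the form $\langle K_j\,\hat{U}_\beta^m,\hat{U}_\alpha^m\rangle$, so using boundedness of the $K_j$ together with the Cauchy–Schwarz and Young inequalities gives $\bigl|\sum\mathcal{K}(\mathbf{\hat{U}}^m,\mathbf{\hat{U}}^m)\bigr|\le C\|\mathbf{\hat{U}}^m\|_{\mathbb{H}_2}^2$, a lower-order term to be absorbed by Grönwall's lemma; the source term is bounded by $\|\mathbb{S}\|_{\mathbb{H}_2}\|\mathbf{\hat{U}}^m\|_{\mathbb{H}_2}\le\tfrac12\|\mathbb{S}\|_{\mathbb{H}_2}^2+\tfrac12\|\mathbf{\hat{U}}^m\|_{\mathbb{H}_2}^2$. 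Collecting everything leads to a differential inequality of the form $\frac{d}{dt}\|\mathbf{\hat{U}}^m\|_{\mathbb{H}_2}^2+2\|\mathbf{\hat{U}}^m\|_{\mathcal V}^2\le C_1\|\mathbf{\hat{U}}^m\|_{\mathbb{H}_2}^2+\|\mathbb{S}\|_{\mathbb{H}_2}^2$. Dropping the nonnegative $\mathcal V$-term and applying Grönwall over $[0,T]$, using the $\mathbb{H}_2$-orthonormality from Lemma \ref{lemme1} to bound the projected initial data $\|\mathbf{\hat{U}}^m(0)\|_{\mathbb{H}_2}\le\|\mathbf{U}_0\|_{\mathbb{H}_2}$, produces the $\max_{0\le t\le T}\|\mathbf{\hat{U}}^m(t)\|_{\mathbb{H}_2}^2$ bound; integrating the full inequality in time and inserting this bound then furnishes the $L^2(0,T;\mathcal V)$ estimate.

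Finally, for the dual norm of $\partial_t\mathbf{\hat{U}}^m$, I would fix $\boldsymbol{\phi}\in\mathcal V$ with $\|\boldsymbol{\phi}\|_{\mathcal V}\le1$ and split $\boldsymbol{\phi}=P_m\boldsymbol{\phi}+(\boldsymbol{\phi}-P_m\boldsymbol{\phi})$, where $P_m$ is the orthogonal projection onto the span of the first $m$ basis elements. Since $\partial_t\mathbf{\hat{U}}^m$ lies in that span and the basis is orthogonal in $\mathbb{H}_2$, the complement contributes nothing, so $\mathcal{L}(\partial_t\mathbf{\hat{U}}^m,\boldsymbol{\phi})=\mathcal{L}(\partial_t\mathbf{\hat{U}}^m,P_m\boldsymbol{\phi})$; invoking \eqref{e9} I replace this by $-\mathcal{A}+\mathcal{K}+\mathcal{L}(\mathbb{S})$ evaluated at $P_m\boldsymbol{\phi}$, and bound each term using continuity of the $a_\alpha$, boundedness of the $K_j$, and the continuous embedding $\mathcal V\hookrightarrow\mathbb{H}_2$, together with $\|P_m\boldsymbol{\phi}\|_{\mathcal V}\le\|\boldsymbol{\phi}\|_{\mathcal V}$. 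This gives $\|\partial_t\mathbf{\hat{U}}^m(t)\|_{\mathcal V'}\le C\bigl(\|\mathbf{\hat{U}}^m(t)\|_{\mathcal V}+\|\mathbb{S}(t)\|_{\mathbb{H}_2}\bigr)$ pointwise in $t$; squaring, integrating over $[0,T]$, and inserting the $L^2(0,T;\mathcal V)$ bound closes the estimate. I expect the main obstacle to be twofold: the careful bookkeeping of the cross-coupling entries of $\mathcal{K}$ so that they are uniformly absorbed, and—more importantly—verifying that the first-order convection terms preserve rather than destroy coercivity, which hinges precisely on the inflow boundary conditions.
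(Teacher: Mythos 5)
Your proposal is correct and follows the same overall architecture as the paper's proof: test the Galerkin system against $\mathbf{\hat{U}}^m$ itself, identify the time-derivative term with $\tfrac12\tfrac{\mathrm{d}}{\mathrm{d}t}\Vert\mathbf{\hat{U}}^m\Vert_{\mathbb{H}_2}^2$, absorb the coupling and source terms by Cauchy--Schwarz and Young, close with Gr\"onwall and the bound $\Vert\mathbf{\hat{U}}^m(0)\Vert_{\mathbb{H}_2}\le\Vert\mathbf{\hat{U}}_0\Vert_{\mathbb{H}_2}$ coming from \eqref{e10}, then estimate $\partial_t\mathbf{\hat{U}}^m$ in $\mathcal{V}'$ by splitting an arbitrary test function into its projection onto $\mathrm{span}\{e_\alpha(\boldsymbol{\varPsi}_i^{(\alpha)})\}_{i\le m}$ plus an orthogonal remainder that pairs to zero. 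The one genuine divergence is your treatment of the first-order convection terms. The paper invokes a G\r{a}rding-type inequality (Theorem 2, p.~300 of \cite{L.C. Evans}) to obtain coercivity of $a_f$ and $a_g$ up to lower-order corrections $\gamma_f,\gamma_g$, which are then swept into the Gr\"onwall constant; you instead integrate $f_f\langle\partial_x u,u\rangle$ and $-f_g\langle\partial_x w,w\rangle$ by parts and observe that the inflow conditions $u(0)=0$, $w(1)=0$ built into $\mathcal{V}_f$ and $\mathcal{V}_g$ leave only the nonnegative boundary contributions $\tfrac{f_f}{2}u(1)^2$ and $\tfrac{f_g}{2}w(0)^2$, so that $a_\alpha(\cdot,\cdot)$ is coercive with constant $1$ relative to the $\mathcal{V}_\alpha$-norms as defined in Section~2. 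Your route is sharper and more self-contained here: it exploits the specific structure of the spaces rather than a general elliptic estimate, avoids introducing the correction constants $\gamma_f,\gamma_g$ altogether, and makes transparent exactly where the boundary conditions enter. Both arguments deliver the same differential inequality and hence the same final estimate.
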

\begin{proof}
Let $T>0$ and $m \in \mathbb{N}^{\ast}$ be fixed.\\
\noindent \textbf{1.} We begin by estimating $ \left\Vert \mathbf{\hat{U}}^m(t) \right\Vert_{\mathbb{H}_2} $ in order to deduce a bound for $ \left\Vert \mathbf{\hat{U}}^m \right\Vert_{\left(L^2\left(0,T; \mathcal{V}\right)\right)^4} $. For $j=1,...,m,$ we multiply \eqref{e9} by $ \text{diag } \left(f_j^m(t),\; s_j^m(t),\; g_j^m(t),\; p_j^m(t) \right) $. Then, summing each resulting equation over \( j = 1, \dots, m \) and using the form \eqref{e8} of \( \mathbf{\hat{U}}^m \), we obtain the following identity for almost every $ t \in [0, T] $
\begin{equation*}
	\label{e13}
	\mathcal{L} \left( \partial_t \mathbf{\hat{U}}^m(t),  \mathbf{\hat{U}}^m(t) \right) + \mathcal{A} \left(\mathbf{\hat{U}}^m(t), \mathbf{\hat{U}}^m(t) \right) - \mathcal{K} \left(\mathbf{\hat{U}}^m(t), \mathbf{\hat{U}}^m(t) \right) = \mathcal{L} \left(\mathbb{S}(x,t), \mathbf{\hat{U}}^m(t) \right).
\end{equation*}
By the $1$-coercivity of the bilinear forms $ a_s $, $ a_g $ and by Theorem 2 in \cite{L.C. Evans} (pp. 300), there exist constants $ \beta_f, \beta_g > 0 $ and $ \gamma_f, \gamma_g \geq 0 $ such that
\begin{equation}
	\label{e14}
\text{ diag } \left( \beta_f, \; 1, \; \beta_g, \; 1 \right) \; \left(  \left\Vert \hat{U}_{\alpha}^m \right\Vert_{\mathcal{V}_{\alpha}}^2  \right)_{\alpha \in \mathcal{I}}^{\top} \preceq \mathcal{A} \left(\mathbf{\hat{U}}^m(t), \mathbf{\hat{U}}^m(t) \right) + \text{ diag } \left( \gamma_f, \; 0, \; \gamma_g, \; 0 \right) \; \left(  \left\Vert \hat{U}_{\alpha}^m \right\Vert_{\mathcal{V}_{\alpha}}^2  \right)_{\alpha \in \mathcal{I}}^{\top}.
\end{equation}
Applying the Cauchy-Schwarz and Young inequalities $ \left(ab \leq \dfrac{1}{2}(a^2 + b^2) \right) $ to each component of  vectors $ \mathcal{K} \left(\mathbf{\hat{U}}^m(t), \mathbf{\hat{U}}^m(t) \right)$ and $ \mathcal{L} \left(\mathbb{S}(x,t), \mathbf{\hat{U}}^m(t) \right) $, we obtain
\begin{equation}
	\label{e16}
\mathcal{K} \left(\mathbf{\hat{U}}^m(t), \mathbf{\hat{U}}^m(t) \right) \preceq M_1 \left(  \left\Vert \hat{U}_{\alpha}^m(t) \right\Vert_{2}^2  \right)_{\alpha \in \mathcal{I}}^{\top}  \quad 	\text{ and } \quad \mathcal{L} \left(\mathbb{S}(x,t), \mathbf{\hat{U}}^m(t) \right)  \preceq M_2 \left(  \left\Vert \hat{U}_{\alpha}^m(t) \right\Vert_{2}^2  \right)_{\alpha \in \mathcal{I}}^{\top} + M_3 \left(  \left\Vert \mathbb{S}_i(x,t) \right\Vert_{2}^2  \right)_{i=1,...,4}^{\top}
\end{equation}
for certain constants $M_1, \, M_2, \, M_3 >0$.\\
Also, we have for almost every $ t \in [0, T] $
\begin{equation}
	\label{e17}
	\mathcal{L} \left( \partial_t \mathbf{\hat{U}}^m(t),  \mathbf{\hat{U}}^m(t) \right) = \left( \dfrac{1}{2} \dfrac{\text{d}}{\text{d}t} \left\Vert U_{\alpha}^m (t) \right\Vert_2^2  \right)_{\alpha \in \mathcal{I}}^{\top} \cdotp
\end{equation}
By adding the inequalities in \eqref{e14}-\eqref{e16} componentwise and summing over the four subsystems, then combining with the identity in \eqref{e17}, we obtain the following estimate for almost every $ t \in [0, T] $,
\begin{equation}
	\label{e18}
	\dfrac{\text{d}}{\text{d}t} \left\Vert \mathbf{\hat{U}}^m(t) \right\Vert_{\mathbb{H}_2}^2 + \left\Vert \mathbf{\hat{U}}^m(t) \right\Vert_{\mathcal{V}}^2 \leq C_1 \left\Vert \mathbf{\hat{U}}^m(t) \right\Vert_{\mathbb{H}_2}^2 + C_2 \left\Vert \mathbb{S}(x,t) \right\Vert_{\mathbb{H}_2}^2
\end{equation}
for appropriate constants $C_1$ and $C_2$. Thus, the differential form of Gronwall's inequality (see \cite{L.C. Evans}) yields the estimate
\begin{equation*}
	\label{e19}
	\left\Vert \mathbf{\hat{U}}^m(t) \right\Vert_{\mathbb{H}_2}^2 \leq \mathrm{e}^{C_1 t} \left( \left\Vert \mathbf{\hat{U}}^m(0) \right\Vert_{\mathbb{H}_2}^2 + C_2 \displaystyle\int_0^t \left\Vert \mathbb{S}(x,s) \right\Vert_{\mathbb{H}_2}^2 \text{d}s \right), \quad 0 \leq t \leq T.
\end{equation*}
Using \eqref{e10}, which ensures that $ \left\Vert \mathbf{\hat{U}}^m(0) \right\Vert_{\mathbb{H}_2}^2 \leq \left\Vert \mathbf{\hat{U}}_0 \right\Vert_{\mathbb{H}_2}^2 $, we deduce the following estimate
\begin{equation*}
	\label{e20}
	\max_{0 \leq t \leq T} \left\Vert \mathbf{\hat{U}}^m(t) \right\Vert_{\mathbb{H}_2}^2 \leq  C \left(\left\Vert \mathbf{\hat{U}}_0 \right\Vert_{\mathbb{H}_2}^2 + \left\Vert \mathbb{S} \right\Vert_{L^2(0,T;\mathbb{H}_2)}^2 \right).
\end{equation*}
Resuming from inequality \eqref{e18}, we integrate both sides from $ 0 $ to $ T $ and use \eqref{e20} to derive
\begin{equation*}
	\label{e21}
	\left\Vert \mathbf{\hat{U}}^m \right\Vert_{\left(L^2(0,T; \mathcal{V})\right)^4}^2 \leq  C \left(\left\Vert \mathbf{\hat{U}}_0 \right\Vert_{\mathbb{H}_2}^2 + \left\Vert \mathbb{S} \right\Vert_{L^2(0,T;\mathbb{H}_2)}^2 \right).
\end{equation*}
\textbf{2.} It remains to estimate $\left\Vert \partial_t \mathbf{\hat{U}}^m \right\Vert_{\left(L^2(0,T; \mathcal{V}^{\prime})\right)^4}\cdotp$ Let \( \boldsymbol{\phi} \in \mathcal{V} \) be arbitrary with \( \left\Vert \boldsymbol{\phi} \right\Vert_{\mathcal{V}} \leq 1 \) and decompose it as \( \boldsymbol{\phi} = \boldsymbol{\phi}_1 + \boldsymbol{\phi}_2 \), where \( \boldsymbol{\phi}_1 \in \text{span} \left\{ e_{\alpha} \left( \boldsymbol{\varPsi}_i^{(\alpha)} \right) : \alpha \in \mathcal{I} \right\}_{i=1}^{\infty} \) and \( \boldsymbol{\phi}_2 \) lies to the orthogonal of this span in \( \mathcal{V} \).
 From Lemma \ref{lemme1}, we have 
\begin{equation*}
	\left\Vert \boldsymbol{\phi}_1 \right\Vert_{\mathcal{V}}^2 \leq  \left\Vert  \boldsymbol{\phi} \right\Vert_{\mathcal{V}}^2 \leq 1.
\end{equation*}
Using \eqref{e9}, we deduce for almost every $t \in [0,T]$ that
\begin{equation*}
	\mathcal{L} \left( \partial_t \mathbf{\hat{U}}^m(t),  \boldsymbol{\phi}_1 \right) + \mathcal{A} \left(\mathbf{\hat{U}}^m(t), \boldsymbol{\phi}_1 \right) - \mathcal{K} \left(\mathbf{\hat{U}}^m(t), \boldsymbol{\phi}_1 \right) = \mathcal{L} \left(\mathbb{S}(x,t), \boldsymbol{\phi}_1 \right).
\end{equation*}
Then, \eqref{e8} implies 
\begin{equation*}
	\label{e22}
	\mathcal{L}^{\ast} \left( \partial_t \mathbf{\hat{U}}^m, \boldsymbol{\phi} \right) = 	\mathcal{L} \left( \partial_t \mathbf{\hat{U}}^m, \boldsymbol{\phi} \right) = \mathcal{L} \left( \partial_t \mathbf{\hat{U}}^m, \boldsymbol{\phi}_1 \right) = \mathcal{L} \left(\mathbb{S}(x,t), \boldsymbol{\phi}_1 \right) - \mathcal{A} \left(\mathbf{\hat{U}}^m(t), \boldsymbol{\phi}_1 \right) + \mathcal{K} \left(\mathbf{\hat{U}}^m(t), \boldsymbol{\phi}_1 \right).
\end{equation*}
Consequently,
\begin{equation*}
	\left\Vert \partial_t \mathbf{\hat{U}}^m \right\Vert_{\mathcal{V}^{\prime}} \leq C \left( \left\Vert \mathbb{S}(x,t) \right\Vert_{\mathbb{H}_2} + \left\Vert \mathbf{\hat{U}}^m \right\Vert_{\mathcal{V}} \right)
\end{equation*}
and by integrating over $(0,T)$ we get
\begin{equation*}
	\label{e23}
	\left\Vert \partial_t \mathbf{\hat{U}}^m \right\Vert_{\left(L^2\left(0,T; \mathcal{V}^{\prime}\right)\right)^4}^2 \leq C \left(\left\Vert \mathbf{\hat{U}}_0 \right\Vert_{\mathbb{H}_2}^2 + \left\Vert \mathbb{S} \right\Vert_{L^2(0,T;\mathbb{H}_2)}^2 \right).
\end{equation*}
\end{proof}

\subsubsection{Passage to limits}
We proceed by passing to the limit as \( m \to \infty \) to obtain a weak solution to \eqref{e1}-\eqref{e2}.

\begin{theorem}[\textbf{Existence of weak solution}]
	\label{theoreme3}
	There exists a weak solution to \eqref{e1}-\eqref{e2}.
\end{theorem}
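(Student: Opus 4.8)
The plan is to complete the Faedo--Galerkin argument by combining the uniform bounds of Theorem \ref{theoreme2} with a weak-compactness argument and a density argument, in the spirit of the classical treatment of linear parabolic equations. Since the system \eqref{e1} is linear in $\mathbf{\hat{U}}$, no strong compactness (e.g. Aubin--Lions) is needed: every term will pass to the limit by weak convergence alone. Concretely, I would first extract weak limits. By Theorem \ref{theoreme2}, the sequence $\{\mathbf{\hat{U}}^m\}$ is bounded in $L^2(0,T;\mathcal{V})$ and $\{\partial_t \mathbf{\hat{U}}^m\}$ is bounded in $L^2(0,T;\mathcal{V}')$. As both spaces are reflexive, the Banach--Alaoglu theorem yields a subsequence, still denoted $\{\mathbf{\hat{U}}^m\}$, and a limit $\mathbf{\hat{U}}$ with
\[
\mathbf{\hat{U}}^m \rightharpoonup \mathbf{\hat{U}} \ \text{ in } \ L^2(0,T;\mathcal{V}), \qquad \partial_t \mathbf{\hat{U}}^m \rightharpoonup \partial_t \mathbf{\hat{U}} \ \text{ in } \ L^2(0,T;\mathcal{V}').
\]
In particular $\mathbf{\hat{U}}$ possesses exactly the regularity required by Definition \ref{definition1}.

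Next I would pass to the limit in the variational identity. Fix $N \in \mathbb{N}^{\ast}$ and take test functions of the form $\boldsymbol{\phi}(t) = \sum_{i=1}^N d_i(t)\,\bigl(\boldsymbol{\varPsi}_i^{(\alpha)}\bigr)_{\alpha \in \mathcal{I}}^{\top}$ with $d_i \in C^1([0,T])$. Multiplying \eqref{e9} by $d_i(t)$, summing over $i = 1,\dots,N$ and integrating over $(0,T)$ gives, for all $m \geq N$,
\[
\int_0^T \Big[ \mathcal{L}^{\ast}(\partial_t \mathbf{\hat{U}}^m, \boldsymbol{\phi}) + \mathcal{A}(\mathbf{\hat{U}}^m, \boldsymbol{\phi}) - \mathcal{K}(\mathbf{\hat{U}}^m, \boldsymbol{\phi}) \Big]\, \text{d}t = \int_0^T \mathcal{L}(\mathbb{S}, \boldsymbol{\phi})\, \text{d}t.
\]
Each left-hand term is a bounded linear functional of the approximation: the $\mathcal{L}^{\ast}$-term through weak convergence of $\partial_t \mathbf{\hat{U}}^m$ in $L^2(0,T;\mathcal{V}')$, the $\mathcal{A}$-term through weak convergence in $L^2(0,T;\mathcal{V})$ together with continuity of the forms $a_\alpha$, and the lower-order $\mathcal{K}$-term through continuity of the $L^2$ pairings against the bounded coefficients $K_j$. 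Letting $m \to \infty$ produces the same identity with $\mathbf{\hat{U}}$ in place of $\mathbf{\hat{U}}^m$, first for these special test functions and then, by the density in $L^2(0,T;\mathcal{V})$ of finite linear combinations of the basis furnished by Lemma \ref{lemme1}, for all $\boldsymbol{\phi} \in L^2(0,T;\mathcal{V})$. A standard localization in $t$ then yields condition \textbf{(i)} of Definition \ref{definition1} for almost every $t \in [0,T]$.

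Finally I would recover the initial datum. Since $\mathbf{\hat{U}} \in L^2(0,T;\mathcal{V})$ with $\partial_t \mathbf{\hat{U}} \in L^2(0,T;\mathcal{V}')$, it admits a representative in $C([0,T];\mathbb{H}_2)$, so $\mathbf{\hat{U}}(0)$ is well defined. Choosing $\boldsymbol{\phi}$ as above with $d_i(T) = 0$, integrating the time-derivative term by parts in both the limiting identity and the approximate one, and comparing the two, I obtain $\mathcal{L}(\mathbf{\hat{U}}(0) - \mathbf{U}_0, \boldsymbol{\phi}(0)) = 0$; invoking \eqref{e10} and the arbitrariness of $\boldsymbol{\phi}(0)$ gives $\mathbf{\hat{U}}(0) = \mathbf{U}_0$, which is condition \textbf{(ii)}.

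The main obstacle to watch is the coupling term $\mathcal{K}$: one must check that the basis-change device used to represent it in Theorem \ref{theoreme1} does not obstruct the limit passage. It does not, because at the level of \eqref{e9} the quantity $\mathcal{K}(\mathbf{\hat{U}}^m, \boldsymbol{\phi})$ is intrinsically a sum of continuous $L^2$ inner products, independent of any basis expansion, and hence weakly continuous in $\mathbf{\hat{U}}^m$. The only genuine care required is to retain the duality pairing $\mathcal{L}^{\ast}$, rather than the $L^2$ form $\mathcal{L}$, in the time-derivative term throughout the density and initial-condition steps.
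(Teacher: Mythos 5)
Your proposal is correct and follows essentially the same route as the paper: uniform bounds from Theorem \ref{theoreme2}, weak compactness in $L^2(0,T;\mathcal{V})$ and $L^2(0,T;\mathcal{V}')$, limit passage against finite linear combinations of the basis from Lemma \ref{lemme1}, density, and recovery of the initial datum by integrating the time derivative by parts against test functions vanishing at $t=T$ and comparing the approximate and limiting identities. The only cosmetic difference is that you invoke Banach--Alaoglu where the paper cites Eberlein--\v{S}mulian; your remark that $\mathcal{K}$ passes to the limit because it is a basis-independent sum of continuous $L^2$ pairings is a correct and worthwhile clarification.
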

\begin{proof}
	Let $T>0$ be fixed. Theorem \eqref{theoreme2} shows that the sequence of Galerkin's approximations $\left\{ \mathbf{\hat{U}}^m \right\}_{m \in \mathbb{N}^{\ast}}$ of the form \eqref{e8} is bounded in $L^{\infty} \left(0,T; \mathcal{V} \right) $, hence in $L^{2} \left(0,T; \mathcal{V} \right) $, while $\left\{ \partial _t \mathbf{\hat{U}}^m \right\}_{m \in \mathbb{N}^{\ast}}$ is bounded in $L^{2} \left(0,T; \mathcal{V}^{\prime} \right) $.\\
	We now use the Eberlein-\v{S}mulian Theorem (see \cite{Daniel}) and deduce that there exists a subsequence  $\left\{ \mathbf{\hat{U}}^{m_{\ell}} \right\}_{\ell \in \mathbb{N}^{\ast}}$ and a function $\mathbf{\hat{U}} \in L^{2} \left(0,T; \mathcal{V} \right) $, with $ \partial_t \mathbf{\hat{U}} \in L^{2} \left(0,T; \mathcal{V}^{\prime} \right) $, such that, as $ \ell \to \infty$,
	\begin{align}
		\label{e24}
		\begin{split}
			\left \{
		\begin{aligned}
			\mathbf{\hat{U}}^{m_{\ell}} & \rightharpoonup \mathbf{\hat{U}} \quad \text{ weakly in } L^{2} \left(0,T; \mathcal{V} \right),\\
			\partial_t \mathbf{\hat{U}}^{m_{\ell}} & \rightharpoonup \partial_t \mathbf{\hat{U}} \quad \text{ weakly in } L^{2} \left(0,T; \mathcal{V}^{\prime} \right).
		\end{aligned}
		\right.
		\end{split}
	\end{align}
	Next, fix an integer $N$ and choose $\mathbf{V} \in L^{2} \left(0,T; \mathcal{V} \right)$ having the form
	\begin{equation}
		\label{e25}
		\mathbf{V} (t) = \left( \displaystyle\sum_{j=1}^N \xi_{j}^{(\alpha)}(t) \boldsymbol{\varPsi}_j^{(\alpha)} \right)_{\alpha \in \mathcal{I}}^{\top},
	\end{equation}
	where each $\left\{ \xi_{j}^{(\alpha)} \right\}_{j=1}^N$ for $\alpha \in \mathcal{I}$ are given smooth functions. Let $m \geq N$ and $T>0$ be fixed. For $j=1,...,m,$ we multiply \eqref{e9} by $ \text{diag } \left(\xi_{j}^{(f)}(t),\; \xi_{j}^{(s)}(t),\; \xi_{j}^{(g)}(t),\; \xi_{j}^{(p)}(t) \right) $. Then, summing each resulting equation over $j = 1,...,N$ and integrating with respect to $t$ we get
	\begin{equation}
		\label{e26}
		\displaystyle\int_0^T \left(\mathcal{L}^{\ast} \left( \partial_t \mathbf{\hat{U}}^m, \mathbf{V} \right) + \mathcal{A} \left(\mathbf{\hat{U}}^m, \mathbf{V} \right) - \mathcal{K} \left(\mathbf{\hat{U}}^m, \mathbf{V} \right)\right) \text{d}t = \displaystyle\int_0^T \mathcal{L} \left(\mathbb{S}(x,t), \mathbf{V} \right) \text{d}t.
	\end{equation}
Setting \( m = m_{\ell} \) and recalling \eqref{e24}, we pass to the weak limit and obtain
	\begin{equation}
		\label{e27}
		\displaystyle\int_0^T \left(\mathcal{L}^{\ast} \left( \partial_t \mathbf{\hat{U}}(t), \mathbf{V} \right) + \mathcal{A} \left(\mathbf{\hat{U}}(t), \mathbf{V} \right) \right) \text{d}t = \displaystyle\int_0^T  \left(\mathcal{K} \left(\mathbf{\hat{U}}(t), \mathbf{V} \right) + \mathcal{L} \left(\mathbb{S}(x,t), \mathbf{V} \right)\right) \text{d}t.
	\end{equation}
	This equality then holds for all functions $\mathbf{V} \in L^{2} \left(0,T; \mathcal{V} \right)$, as functions of the form \eqref{e25} are dense in this space (see \cite{Kouma} and \cite{J. Droniou}). Hence in particular
	\begin{equation*}
		\label{e28}
			 \mathcal{L}^{\ast} \left( \partial_t \mathbf{\hat{U}}(t), \mathbf{V} \right) + \mathcal{A} \left(\mathbf{\hat{U}}(t), \mathbf{V} \right) - \mathcal{K} \left(\mathbf{\hat{U}}(t), \mathbf{V} \right) =  \mathcal{L} \left(\mathbb{S}(x,t), \mathbf{V} \right)
	\end{equation*}
	for each $\mathbf{V} \in L^{2} \left(0,T; \mathcal{V} \right)$ and for almost every $t \in [0,T]$. Furthermore, we have $ \mathbf{\hat{U}} \in C \left( [0,T]; \mathbb{H}_2 \right)$.\\
	Now, we prove that $\mathbf{\hat{U}}$ satisifies initial conditions \eqref{e2}. By performing an integration by parts in \eqref{e27}, we first note that
	\begin{equation}
		\label{e29}
		\displaystyle\int_0^T - \mathcal{L}^{\ast} \left( \mathbf{\hat{U}}(t), \partial_t \mathbf{V} \right) + \mathcal{A} \left(\mathbf{\hat{U}}(t), \mathbf{V} \right)  \text{d}t = \displaystyle\int_0^T  \mathcal{K} \left(\mathbf{\hat{U}}(t), \mathbf{V} \right) + \mathcal{L} \left(\mathbb{S}(x,t), \mathbf{V} \right) \text{d}t + \mathcal{L} \left(  \mathbf{\hat{U}}(0), \mathbf{V}(0) \right) 
	\end{equation}
	for each $\mathbf{V} \in C^1 \left(0,T; \mathcal{V} \right)$ with $\mathbf{V}(T) = 0_{\mathbb{R}^4}$.\\
	Similarly, from \eqref{e26}, we deduce
	\begin{equation*}
		\label{e30}
		\displaystyle\int_0^T - \mathcal{L}^{\ast} \left( \mathbf{\hat{U}}^m(t), \partial_t \mathbf{V} \right) + \mathcal{A} \left(\mathbf{\hat{U}}^m(t), \mathbf{V} \right)  \text{d}t = \displaystyle\int_0^T  \mathcal{K} \left(\mathbf{\hat{U}}^m(t), \mathbf{V} \right) + \mathcal{L} \left(\mathbb{S}(x,t), \mathbf{V} \right) \text{d}t + \mathcal{L} \left(  \mathbf{\hat{U}}^m(0), \mathbf{V}(0) \right).
	\end{equation*}
	Setting $m = m_{\ell}$ and applying \eqref{e24} once more, we obtain
	\begin{equation}
		\label{e31}
		\displaystyle\int_0^T - \mathcal{L}^{\ast} \left( \mathbf{\hat{U}}(t), \partial_t \mathbf{V} \right) + \mathcal{A} \left(\mathbf{\hat{U}}(t), \mathbf{V} \right)  \text{d}t = \displaystyle\int_0^T  \mathcal{K} \left(\mathbf{\hat{U}}(t), \mathbf{V} \right) + \mathcal{L} \left(\mathbb{S}(x,t), \mathbf{V} \right) \text{d}t + \mathcal{L} \left(  \mathbf{\hat{U}}_0, \mathbf{V}(0) \right).
	\end{equation}
Since $\mathbf{V}(0)$ is arbitrary, comparing \eqref{e29} and \eqref{e31} we get $\mathbf{\hat{U}}(0) = \mathbf{\hat{U}}_0$.
\end{proof}

\subsection{Uniqueness of the weak solution}
\begin{theorem}
	\label{theoreme4}
	A weak solution of \eqref{e1}-\eqref{e2} is unique.
\end{theorem}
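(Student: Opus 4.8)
The plan is to exploit the linearity of \eqref{e1}--\eqref{e2} together with a Gronwall-type energy estimate. Suppose $\mathbf{\hat{U}}_1$ and $\mathbf{\hat{U}}_2$ are two weak solutions in the sense of Definition \ref{definition1}, associated with the same data $\mathbb{S}$ and $\mathbf{U}_0$, and set $\mathbf{W} = \mathbf{\hat{U}}_1 - \mathbf{\hat{U}}_2 = \left( W_{\alpha} \right)_{\alpha \in \mathcal{I}}^{\top}$. Subtracting the two identities in part \textbf{(i)} and using the linearity of $\mathcal{L}^{\ast}$, $\mathcal{A}$ and $\mathcal{K}$ in their first argument, $\mathbf{W}$ satisfies
\[
\mathcal{L}^{\ast}\left(\partial_t \mathbf{W}(t), \boldsymbol{\phi}\right) + \mathcal{A}\left(\mathbf{W}(t), \boldsymbol{\phi}\right) - \mathcal{K}\left(\mathbf{W}(t), \boldsymbol{\phi}\right) = 0_{\mathbb{R}^4}
\]
for almost every $t \in [0,T]$ and all $\boldsymbol{\phi} \in \mathcal{V}$, with $\mathbf{W}(0) = 0$, $\mathbf{W} \in L^2(0,T;\mathcal{V})$ and $\partial_t \mathbf{W} \in L^2(0,T;\mathcal{V}^{\prime})$. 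The natural choice is to test with $\boldsymbol{\phi} = \mathbf{W}(t)$, which is admissible for almost every $t$, yielding
\[
\mathcal{L}^{\ast}\left(\partial_t \mathbf{W}(t), \mathbf{W}(t)\right) + \mathcal{A}\left(\mathbf{W}(t), \mathbf{W}(t)\right) - \mathcal{K}\left(\mathbf{W}(t), \mathbf{W}(t)\right) = 0_{\mathbb{R}^4}.
\]

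The crucial step, and the one I expect to require the most care, is to identify the first term with the time derivative of the $\mathbb{H}_2$-norm. Since $\partial_t \mathbf{W}$ lives only in the dual $\mathcal{V}^{\prime}$ and not in $L^2$, the identity $\mathcal{L}^{\ast}\left(\partial_t \mathbf{W}(t), \mathbf{W}(t)\right) = \left( \frac{1}{2}\frac{\text{d}}{\text{d}t}\left\Vert W_{\alpha}(t)\right\Vert_2^2 \right)_{\alpha \in \mathcal{I}}^{\top}$ cannot be obtained by a direct computation as in \eqref{e17}. It follows instead from the regularity lemma for the Gelfand triple $\mathcal{V}_{\alpha} \hookrightarrow L^2(0,1) \hookrightarrow \mathcal{V}_{\alpha}^{\prime}$ applied componentwise (see \cite{L.C. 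Evans}), which also guarantees $\mathbf{W} \in C\left([0,T];\mathbb{H}_2\right)$ so that the pointwise value $\mathbf{W}(0)$ is meaningful. Summing the four components then gives, for almost every $t \in [0,T]$,
\[
\frac{1}{2}\frac{\text{d}}{\text{d}t}\left\Vert \mathbf{W}(t)\right\Vert_{\mathbb{H}_2}^2 + \sum_{\alpha \in \mathcal{I}} a_{\alpha}\left(W_{\alpha}, W_{\alpha}\right) = \sum_{\alpha \in \mathcal{I}} \left( \mathcal{K}\left(\mathbf{W}(t), \mathbf{W}(t)\right) \right)_{\alpha}.
\]

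To close the estimate I would reuse the coercivity of the bilinear forms already recorded in \eqref{e14} and the coupling bound in \eqref{e16}, now applied to $\mathbf{W}$ rather than to the Galerkin iterate $\mathbf{\hat{U}}^m$: coercivity furnishes $\sum_{\alpha \in \mathcal{I}} a_{\alpha}\left(W_{\alpha}, W_{\alpha}\right) \geq c\left\Vert \mathbf{W}(t)\right\Vert_{\mathcal{V}}^2 - \gamma\left\Vert \mathbf{W}(t)\right\Vert_{\mathbb{H}_2}^2$ for constants $c > 0$ and $\gamma \geq 0$, while the Cauchy--Schwarz and Young inequalities control each coupling term by $C\left\Vert \mathbf{W}(t)\right\Vert_{\mathbb{H}_2}^2$, the exchange coefficients $K_j$ being bounded. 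Discarding the nonnegative contribution $c\left\Vert \mathbf{W}(t)\right\Vert_{\mathcal{V}}^2$ and absorbing the remaining terms, I obtain a differential inequality of the form
\[
\frac{\text{d}}{\text{d}t}\left\Vert \mathbf{W}(t)\right\Vert_{\mathbb{H}_2}^2 \leq C_3 \left\Vert \mathbf{W}(t)\right\Vert_{\mathbb{H}_2}^2, \quad \text{for almost every } t \in [0,T].
\]
The differential form of Gronwall's inequality together with $\left\Vert \mathbf{W}(0)\right\Vert_{\mathbb{H}_2} = 0$ then forces $\left\Vert \mathbf{W}(t)\right\Vert_{\mathbb{H}_2} = 0$ for every $t \in [0,T]$, hence $\mathbf{\hat{U}}_1 = \mathbf{\hat{U}}_2$ and the weak solution is unique. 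Once the energy identity in the dual setting is secured, the coercivity and coupling estimates together with Gronwall's lemma are routine adaptations of the a priori bounds established in Theorem \ref{theoreme2}.
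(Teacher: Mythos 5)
Your proposal is correct and follows essentially the same route as the paper: subtract the two weak formulations, test with the difference, invoke the coercivity and coupling bounds from \eqref{e14}--\eqref{e16}, and conclude by Gronwall with zero initial data. Your explicit appeal to the Gelfand-triple regularity lemma to justify the energy identity $\mathcal{L}^{\ast}(\partial_t \mathbf{W},\mathbf{W}) = \tfrac12\tfrac{\mathrm{d}}{\mathrm{d}t}\Vert \mathbf{W}\Vert_{\mathbb{H}_2}^2$ when $\partial_t\mathbf{W}$ lies only in $\mathcal{V}'$ is a point the paper passes over silently, and is a welcome addition of rigor rather than a deviation.
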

\begin{proof}
	 Let $T>0$ be fixed. Let us assume that there exist two weak solutions, denoted by $\mathbf{\hat{U}}$ and $\mathbf{\hat{V}}$, to problem \eqref{e1}-\eqref{e2}. We aim to prove that $\mathbf{\hat{U}} \equiv \mathbf{\hat{V}}$, i.e., $\mathbf{\hat{U}} - \mathbf{\hat{V}} \equiv 0$.\\
	 Since both $\mathbf{\hat{U}}$ and $\mathbf{\hat{V}}$ satisfy \eqref{e3}-\eqref{e4}, it follows, for all $\boldsymbol{\phi} \in \mathcal{V}$ and for almost every $t \in [0,T]$, that
	 \begin{equation}
	 	\label{e32}
	 	\mathcal{L} \left( \partial_t \mathbf{\hat{U}}(t) - \partial_t \mathbf{\hat{V}}(t), \boldsymbol{\phi} \right) + \mathcal{A} \left(\mathbf{\hat{U}}(t) - \mathbf{\hat{V}}(t), \boldsymbol{\phi} \right) - \mathcal{K} \left(\mathbf{\hat{U}}(t) - \mathbf{\hat{V}}(t), \boldsymbol{\phi} \right) = 0.
	 \end{equation}
	 Taking $\boldsymbol{\phi} = \mathbf{\hat{U}} - \mathbf{\hat{V}}$ in \eqref{e32} yields for almost every $t \in [0,T]$:
	 \begin{align*}
	 	\label{e33}
	 	\begin{split}
	 		& \dfrac{\text{d}}{\text{d}t} \left( \dfrac{1}{2} \left\Vert \mathbf{\hat{U}}(t) - \mathbf{\hat{V}}(t) \right\Vert_{\mathbb{H}_2}^2 \right) + \mathcal{A} \left(\mathbf{\hat{U}}(t) - \mathbf{\hat{V}}(t), \mathbf{\hat{U}}(t) - \mathbf{\hat{V}}(t) \right) - \mathcal{K} \left(\mathbf{\hat{U}}(t) - \mathbf{\hat{V}}(t), \mathbf{\hat{U}}(t) - \mathbf{\hat{V}}(t) \right) \\ & = \mathcal{L}^{\ast} \left( \partial_t \mathbf{\hat{U}}(t) - \partial_t \mathbf{\hat{V}}(t), \mathbf{\hat{U}}(t) - \mathbf{\hat{V}}(t) \right) + \mathcal{A} \left(\mathbf{\hat{U}}(t) - \mathbf{\hat{V}}(t), \mathbf{\hat{U}}(t) - \mathbf{\hat{V}}(t) \right) - \mathcal{K} \left(\mathbf{\hat{U}}(t) - \mathbf{\hat{V}}(t), \mathbf{\hat{U}}(t) - \mathbf{\hat{V}}(t) \right) =0.
	 	\end{split}
	 \end{align*}
	 Using inequalities \eqref{e14}-\eqref{e16}, we obtain
	 \begin{equation*}
	 	\label{e34}
	 	\dfrac{\text{d}}{\text{d}t} \left\Vert \mathbf{\hat{U}}(t) - \mathbf{\hat{V}}(t) \right\Vert_{\mathbb{H}_2}^2 \leq \kappa \left\Vert \mathbf{\hat{U}}(t) - \mathbf{\hat{V}}(t) \right\Vert_{\mathbb{H}_2}^2
	 \end{equation*}
	 for a certain constant $\kappa >0$ and for almost every $t \in [0,T]$. Gronwall's inequality imply
	 \begin{equation*}
	 	\left\Vert \mathbf{\hat{U}}(t) - \mathbf{\hat{V}}(t) \right\Vert_{\mathbb{H}_2}^2 \leq \mathrm{e}^{\kappa t } \left\Vert \mathbf{\hat{U}}(0) - \mathbf{\hat{V}}(0) \right\Vert_{\mathbb{H}_2}^2 = 0 
 	 \end{equation*} 
 	 since $\mathbf{\hat{U}}$ and $\mathbf{\hat{V}}$ satisfy \eqref{e4}.\\
 	 Consequently, $\mathbf{\hat{U}} \equiv \mathbf{\hat{V}}$.
 	 
\end{proof}

\section{Additional regularity results}
The regularity of the solution increases with that of the data. More precisely, we have the following result.
\begin{theorem}[\textbf{Gain of regularity}]
	\label{theoreme5}
	Let $ T > 0 $ be fixed. Assume that $ \mathbf{\hat{U}}_0 \in \mathcal{V} $, $ \mathbb{S} \in L^2(0,T; \mathbb{H}_2) $ and 
	\begin{equation*}
	\mathbf{\hat{U}} \in L^2 \left( 0,T; \mathcal{V} \right) \quad \text{with} \quad \partial_t \mathbf{\hat{U}} \in L^2(0,T; \mathcal{V}')
	\end{equation*}
	is the weak solution to the system \eqref{e1}-\eqref{e2}. Then,
	\begin{equation*}
	\mathbf{\hat{U}} \in L^2 \left(0,T; (H^2(0,1))^4 \right) \cap L^\infty(0,T; \mathcal{V}), \quad \partial_t \mathbf{\hat{U}} \in L^2(0,T; \mathbb{H}_2),
	\end{equation*}
	and
	\begin{equation*}
		\label{e35}
\underset{0 \leq t \leq T}{\mathrm{ess} \; \mathrm{sup}} \left( \left\Vert \mathbf{\hat{U}}(t) \right\Vert_{\mathcal{V}} + \left\Vert \mathbf{\hat{U}} \right\Vert_{L^2 \left( 0,T; \left( H^2(0,1) \right)^4 \right)} + \left\Vert \partial_t \mathbf{\hat{U}} \right\Vert_{L^2(0,T;\mathbb{H}_2)} \right) \leq C \left( \left\Vert \mathbb{S} \right\Vert_{L^2(0,T;\mathbb{H}_2)} + \left\Vert \mathbf{\hat{U}}_0 \right\Vert_{\mathcal{V}} \right),
	\end{equation*}
	for some constant $ C > 0 $.
\end{theorem}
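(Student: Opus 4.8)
\emph{The plan.} The plan is to run a \emph{second} energy estimate at the Galerkin level, testing the approximate system against $\partial_t\mathbf{\hat{U}}^m$ instead of against $\mathbf{\hat{U}}^m$. Fix $T>0$ and let $\mathbf{\hat{U}}^m$ be the approximate solution of Theorem~\ref{theoreme1}. For each $j=1,\dots,m$ I would multiply \eqref{e9} by $\mathrm{diag}\!\left(\tfrac{\mathrm{d}}{\mathrm{d}t}f_j^m(t),\tfrac{\mathrm{d}}{\mathrm{d}t}s_j^m(t),\tfrac{\mathrm{d}}{\mathrm{d}t}g_j^m(t),\tfrac{\mathrm{d}}{\mathrm{d}t}p_j^m(t)\right)$, sum over $j$, and add the four resulting scalar identities. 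Since the diffusion and reaction parts of each $a_\alpha$ are symmetric and time-independent, they reconstruct the $\mathcal V_\alpha$-inner products and give $\tfrac12\tfrac{\mathrm{d}}{\mathrm{d}t}\|\mathbf{\hat{U}}^m(t)\|_{\mathcal V}^2$; the only non-variational contributions from $\mathcal A$ are the first-order convection terms. Thus I obtain, for almost every $t$,
\begin{equation*}
\left\Vert \partial_t\mathbf{\hat{U}}^m(t)\right\Vert_{\mathbb H_2}^2 + \tfrac12\tfrac{\mathrm{d}}{\mathrm{d}t}\left\Vert\mathbf{\hat{U}}^m(t)\right\Vert_{\mathcal V}^2 = -\,f_f\langle\partial_x\hat U_f^m,\partial_t\hat U_f^m\rangle + f_g\langle\partial_x\hat U_g^m,\partial_t\hat U_g^m\rangle + \mathcal K_\Sigma^m(t) + \mathcal S_\Sigma^m(t),
\end{equation*}
where $\mathcal K_\Sigma^m$ and $\mathcal S_\Sigma^m$ denote the componentwise sums of $\mathcal K(\mathbf{\hat{U}}^m,\partial_t\mathbf{\hat{U}}^m)$ and $\mathcal L(\mathbb S,\partial_t\mathbf{\hat{U}}^m)$.

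\emph{Uniform estimate.} Each term on the right I would bound by Cauchy--Schwarz and Young's inequality, splitting off a small multiple $\varepsilon\|\partial_t\mathbf{\hat{U}}^m\|_{\mathbb H_2}^2$, to be absorbed on the left, together with lower-order quantities: the convection terms leave $C_\varepsilon\|\partial_x\hat U_\alpha^m\|_2^2\le C_\varepsilon\|\hat U_\alpha^m\|_{\mathcal V_\alpha}^2$, the coupling term $\mathcal K_\Sigma^m$ leaves $C_\varepsilon\|\mathbf{\hat{U}}^m\|_{\mathbb H_2}^2$ (using boundedness of the $K_j$), and the source leaves $C_\varepsilon\|\mathbb S\|_{\mathbb H_2}^2$. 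After fixing $\varepsilon$ small and integrating on $[0,t]$,
\begin{equation*}
\int_0^t\left\Vert\partial_t\mathbf{\hat{U}}^m\right\Vert_{\mathbb H_2}^2\,\mathrm{d}s + \left\Vert\mathbf{\hat{U}}^m(t)\right\Vert_{\mathcal V}^2 \le \left\Vert\mathbf{\hat{U}}^m(0)\right\Vert_{\mathcal V}^2 + C\int_0^t\!\left(\left\Vert\mathbf{\hat{U}}^m\right\Vert_{\mathcal V}^2+\left\Vert\mathbf{\hat{U}}^m\right\Vert_{\mathbb H_2}^2\right)\mathrm{d}s + C\left\Vert\mathbb S\right\Vert_{L^2(0,T;\mathbb H_2)}^2 .
\end{equation*}
The two time integrals on the right are controlled uniformly in $m$ by Theorem~\ref{theoreme2}. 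For the initial term I would invoke the simultaneous orthogonality of Lemma~\ref{lemme1}: because the basis is orthogonal in $\mathcal V$ and orthonormal in $\mathbb H_2$, the truncation used in \eqref{e10} is also the $\mathcal V$-orthogonal projection, whence $\|\mathbf{\hat{U}}^m(0)\|_{\mathcal V}\le\|\mathbf{\hat{U}}_0\|_{\mathcal V}$. Using the continuous embedding $\mathcal V\hookrightarrow\mathbb H_2$, this yields a bound on $\max_{0\le t\le T}\|\mathbf{\hat{U}}^m(t)\|_{\mathcal V}^2+\int_0^T\|\partial_t\mathbf{\hat{U}}^m\|_{\mathbb H_2}^2$ by $C\big(\|\mathbf{\hat{U}}_0\|_{\mathcal V}^2+\|\mathbb S\|_{L^2(0,T;\mathbb H_2)}^2\big)$, uniformly in $m$.

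\emph{Passage to the limit and elliptic regularity.} These bounds allow me to extract, along the subsequence of Theorem~\ref{theoreme3}, weak-$\ast$ and weak limits $\mathbf{\hat{U}}^m\rightharpoonup\mathbf{\hat{U}}$ in $L^\infty(0,T;\mathcal V)$ and $\partial_t\mathbf{\hat{U}}^m\rightharpoonup\partial_t\mathbf{\hat{U}}$ in $L^2(0,T;\mathbb H_2)$; by uniqueness (Theorem~\ref{theoreme4}) the limit is the given weak solution, and weak lower semicontinuity of the norms delivers $\mathbf{\hat{U}}\in L^\infty(0,T;\mathcal V)$, $\partial_t\mathbf{\hat{U}}\in L^2(0,T;\mathbb H_2)$ with the corresponding part of the stated bound. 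For the spatial regularity I would read \eqref{e1} pointwise in $t$ as $\partial_x(\mathbb E\,\partial_x\mathbf{\hat{U}})=\partial_t\mathbf{\hat{U}}+\mathbb F\,\partial_x\mathbf{\hat{U}}-\mathbb K\mathbf{\hat{U}}-\mathbb S$; for almost every $t$ the right-hand side lies in $\mathbb H_2$, since $\partial_t\mathbf{\hat{U}}(t)\in\mathbb H_2$, $\mathbf{\hat{U}}(t)\in\mathcal V\subset(H^1(0,1))^4$ and the $K_j$ are bounded. Hence each $E_\alpha\partial_x\hat U_\alpha\in H^1(0,1)$, and since $E_\alpha\in C^1([0,1])$ is bounded below by a positive constant, $\partial_x\hat U_\alpha=(E_\alpha\partial_x\hat U_\alpha)/E_\alpha\in H^1(0,1)$, i.e. $\hat U_\alpha(t)\in H^2(0,1)$, with the one-dimensional elliptic bound $\|\mathbf{\hat{U}}(t)\|_{(H^2(0,1))^4}\le C\big(\|\partial_t\mathbf{\hat{U}}(t)\|_{\mathbb H_2}+\|\mathbf{\hat{U}}(t)\|_{\mathcal V}+\|\mathbb S(t)\|_{\mathbb H_2}\big)$. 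Squaring, integrating in $t$ and combining with the previous bounds gives $\mathbf{\hat{U}}\in L^2(0,T;(H^2(0,1))^4)$ and the full inequality.

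\emph{Main obstacle.} I expect the crux to be the second energy estimate rather than the elliptic step: the convection terms are first-order and \emph{not} symmetric, and the off-diagonal operator $\mathcal K$ couples distinct components, so neither produces an exact time derivative when tested against $\partial_t\mathbf{\hat{U}}^m$. The argument therefore hinges on absorbing the resulting $\|\partial_t\mathbf{\hat{U}}^m\|_{\mathbb H_2}^2$ contributions into the left-hand side with a sufficiently small Young parameter, while controlling the residual $\mathcal V$- and $\mathbb H_2$-norms through Theorem~\ref{theoreme2}; a secondary technical point is the $\mathcal V$-bound on the initial projection, which is precisely what the simultaneous orthogonality of Lemma~\ref{lemme1} supplies.
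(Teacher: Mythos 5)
Your proposal is correct and follows essentially the same route as the paper: test the Galerkin system against $\partial_t\mathbf{\hat{U}}^m$, turn the symmetric part of $\mathcal{A}$ into an exact time derivative, absorb the remaining terms via Young's inequality using the bounds of Theorem~\ref{theoreme2} and the projection estimate $\Vert\mathbf{\hat{U}}^m(0)\Vert_{\mathcal V}\le\Vert\mathbf{\hat{U}}_0\Vert_{\mathcal V}$, pass to the limit, and recover $H^2$ regularity from the equation read pointwise in $t$. The only (cosmetic) differences are that the paper keeps only the diffusion part as the exact derivative and pushes the reaction terms into the Young step, and it cites Evans' elliptic regularity theorem rather than the direct one-dimensional argument you give.
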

\begin{proof}
	Let $m \in \mathbb{N}^{\ast}$ and $T>0$ be fixed.\\ For $j=1,...,m$ and $t \in [0,T]$, we multiply \eqref{e9} by $ \text{diag } \left(\dfrac{\text{d}f_j^m}{\text{d}t}(t),\; \dfrac{\text{d}s_j^m}{\text{d}t}(t),\; \dfrac{\text{d}g_j^m}{\text{d}t}(t),\; \dfrac{\text{d}p_j^m}{\text{d}t}(t) \right) $. Then, summing each resulting equation over $j = 1,...,m$, we have
	\begin{equation*}
		\mathcal{L} \left( \partial_t \mathbf{\hat{U}}^m(t),  \partial_t \mathbf{\hat{U}}^m(t) \right) + \mathcal{A} \left(\mathbf{\hat{U}}^m(t), \partial_t \mathbf{\hat{U}}^m(t) \right) = \mathcal{K} \left(\mathbf{\hat{U}}^m(t), \partial_t \mathbf{\hat{U}}^m(t) \right) + \mathcal{L} \left(\mathbb{S}(x,t), \partial_t \mathbf{\hat{U}}^m(t) \right)
		\end{equation*}
		for almost every $ t \in [0,T] $. We write
		\begin{equation*}
			\mathcal{A} \left(\mathbf{\hat{U}}^m(t), \partial_t \mathbf{\hat{U}}^m(t) \right) = \mathcal{B} \left(\mathbf{\hat{U}}^m(t), \partial_t \mathbf{\hat{U}}^m(t) \right) + \mathcal{C} \left(\mathbf{\hat{U}}^m(t), \partial_t \mathbf{\hat{U}}^m(t) \right),
		\end{equation*}
		where
		\begin{equation*}
			\mathcal{B} \left(\mathbf{\hat{U}}^m(t), \partial_t \mathbf{\hat{U}}^m(t) \right) = \begin{bmatrix}
				 \left\langle E_f \partial_x \hat{U}_f^m, \partial_{xt}^2 \hat{U}_f^m \right\rangle\\
				 \left\langle E_s \partial_x \hat{U}_s^m, \partial_{xt}^2 \hat{U}_s^m \right\rangle\\
				\left\langle E_g \partial_x \hat{U}_g^m, \partial_{xt}^2 \hat{U}_g^m \right\rangle\\
				\left\langle E_p \partial_x \hat{U}_p^m, \partial_{xt}^2 \hat{U}_p^m \right\rangle
			\end{bmatrix} \text{ and } \mathcal{C} \left(\mathbf{\hat{U}}^m(t), \partial_t \mathbf{\hat{U}}^m(t) \right) = \mathcal{A} \left(\mathbf{\hat{U}}^m(t), \partial_t \mathbf{\hat{U}}^m(t) \right) - \mathcal{B} \left(\mathbf{\hat{U}}^m(t), \partial_t \mathbf{\hat{U}}^m(t) \right).
		\end{equation*}
		It is easy to see that
		\begin{equation*}
			\mathcal{B} \left(\mathbf{\hat{U}}^m(t), \partial_t \mathbf{\hat{U}}^m(t) \right) = \dfrac{1}{2} \dfrac{\text{d}}{\text{d}t}  \mathcal{B} \left(\mathbf{\hat{U}}^m(t), \mathbf{\hat{U}}^m(t) \right).
		\end{equation*}
		Furthermore, let $\varepsilon >0$ and apply Young's inequality $ \left( ab \leq \varepsilon a^2 + \dfrac{b^2}{4 \varepsilon} \right) $ (see \cite{L.C. Evans}) to obtain
		\begin{align}
			\label{e36}
		\begin{split}
				\mathcal{C} \left(\mathbf{\hat{U}}^m(t), \partial_t \mathbf{\hat{U}}^m(t) \right) & \preceq \dfrac{C}{\varepsilon} \mathbb{I}_4 \left( \left\Vert \hat{U}_{\alpha}^m \right\Vert_{\mathcal{V}_{\alpha}}^2 \right)_{\alpha \in \mathcal{I}}^{\top} + \varepsilon \mathbb{I}_4 \left( \left\Vert \partial_t \hat{U}_{\alpha}^m \right\Vert_{2}^2 \right)_{\alpha \in \mathcal{I}}^{\top} \\ 
			 \mathcal{L} \left(\mathbb{S}(x,t), \partial_t \mathbf{\hat{U}}^m(t) \right) & \preceq \dfrac{C}{\varepsilon} \mathbb{I}_4  \left(  \left\Vert \mathbb{S}_i(x,t) \right\Vert_{2}^2  \right)_{i=1,...,4}^{\top}  + \varepsilon \mathbb{I}_4 \left( \left\Vert \partial_t \hat{U}_{\alpha}^m \right\Vert_{2}^2 \right)_{\alpha \in \mathcal{I}}^{\top},
		\end{split}
		\end{align}
		where $ \mathbb{I}_4 $ stands for the identity matrix in $ \mathbb{R}^{4 \times 4} $. By adding inequalities \eqref{e36} term by term, we find
		\begin{equation*}
			\left( \left\Vert \partial_t \hat{U}_{\alpha}^m \right\Vert_{2}^2 \right)_{\alpha \in \mathcal{I}}^{\top} + \dfrac{1}{2} \dfrac{\text{d}}{\text{d}t}  \mathcal{B} \left(\mathbf{\hat{U}}^m(t), \mathbf{\hat{U}}^m(t) \right) \preceq \dfrac{C}{\varepsilon} \mathbb{I}_4 \left( \left( \left\Vert \hat{U}_{\alpha}^m \right\Vert_{\mathcal{V}_{\alpha}}^2 \right)_{\alpha \in \mathcal{I}}^{\top} + \left(  \left\Vert \mathbb{S}_i(x,t) \right\Vert_{2}^2  \right)_{i=1,...,4}^{\top} \right) + 2 \varepsilon \left( \left\Vert \partial_t \hat{U}_{\alpha}^m \right\Vert_{2}^2 \right)_{\alpha \in \mathcal{I}}^{\top}.
		\end{equation*}
	 Summing over each components for $\varepsilon = \dfrac{1}{4}$ and integrating over $[0,T]$, we find
		\begin{equation}
			\label{e37}
		\sup_{0\leq t \leq T}	 \left\Vert \mathbf{\hat{U}}^m(t) \right\Vert_{\mathcal{V}}^2   \leq C \left( \left\Vert \mathbb{S} \right\Vert_{L^2(0,T;\mathbb{H}_2)}^2 + \left\Vert \mathbf{\hat{U}}_0 \right\Vert_{\mathcal{V}}^2 \right)
		\end{equation}
		according to Theorem \ref{theoreme2}, where we estimated $\left\Vert \mathbf{\hat{U}}^m(0) \right\Vert_{\mathcal{V}} \leq \left\Vert \mathbf{\hat{U}}_0 \right\Vert_{\mathcal{V}}$. Passing to limits as $m = m_{\ell} \to \infty$, we deduce $\mathbf{\hat{U}} \in L^{\infty} \left(0,T;\mathcal{V}\right)$, $\partial_t \mathbf{\hat{U}} \in L^2 \left(0,T; \mathbb{H}_2\right)$.\\
		It is clear that for all $\boldsymbol{\phi} \in \mathcal{V}$, we have
		\begin{equation*}
			\mathcal{L} \left( \partial_t \mathbf{\hat{U}}^m(t),  \boldsymbol{\phi} \right) + \mathcal{A} \left(\mathbf{\hat{U}}^m(t),\boldsymbol{\phi} \right) = \mathcal{K} \left(\mathbf{\hat{U}}^m(t), \boldsymbol{\phi} \right) + \mathcal{L} \left(\mathbb{S}(x,t), \boldsymbol{\phi} \right)
		\end{equation*}
		for almost every $t \in [0,T]$. It follows that
		\begin{equation*}
				 \mathcal{A} \left(\mathbf{\hat{U}}^m(t), \boldsymbol{\phi} \right) = \mathcal{K} \left(\mathbf{\hat{U}}^m(t), \boldsymbol{\phi} \right) + \mathcal{L} \left(\mathbb{S}(x,t) - \partial_t \mathbf{\hat{U}}^m, \boldsymbol{\phi} \right)
		\end{equation*}
		for almost every $t \in [0,T]$. Since the norms on each space $\mathcal{V}_\alpha, \; \alpha \in \mathcal{I}$, are equivalent to the standard $H_0^1(0,1)$-norm, and since $\mathbb{S}(x,t) - \partial_t \mathbf{\hat{U}}^m(t) \in \mathbb{H}_2$ for almost every $t \in [0,T]$, we deduce, from Theorem 4 in \cite{L.C. Evans} (pp. 317), that $\mathbf{\hat{U}}(t) \in \left(H^2(0,1)\right)^4$ almost everywhere in $[0,T]$, with the estimate
		\begin{equation}
			\label{e38}
			\left\Vert \mathbf{\hat{U}(t)} \right\Vert_{\left(H^2(0,1)\right)^4}^2 \leq C \left(  \left\Vert \mathbb{S}(x,t) \right\Vert_{\mathbb{H}_2}^2 + \left\Vert \partial_t \mathbf{\hat{U}} \right\Vert_{\mathbb{H}_2}^2 +  \left\Vert \mathbf{\hat{U}} \right\Vert_{\mathbb{H}_2}^2 \right).
		\end{equation}
	Integrating over $[0,T]$ and using the estimates given in \eqref{e37}, we complete the proof.
	
\end{proof}

\section*{Conclusion}

This work presents a detailed mathematical analysis of the coupled system $(\Sigma_T)$, which models multi-phase thermal exchanges with strong, asymmetric interfacial interactions. The main analytical challenge was due to the complex structure of the coupling terms, particularly the non-symmetric and nonlinear cross-diffusion effects. To address these, we developed a Faedo-Galerkin approximation scheme, combined with suitable basis transformations to simplify certain interactions. This approach allowed us to demonstrate the existence, uniqueness, and regularity of weak solutions in a nonlinear and strongly coupled parabolic setting. Our findings broaden the classical theory of reaction-convection-diffusion systems by covering a new class of multilayer energy exchange models, beyond the usual symmetry or weak coupling assumptions. In addition to its theoretical interest, this study provides a solid foundation for the development of reliable numerical simulations in the context of advanced thermal technologies.

\section*{References}

\end{document}